\documentclass[12pt, a4paper]{article}%\usepackage{CJK}
\usepackage{amsmath,amssymb,amsthm,amsfonts,latexsym,graphicx,subfigure}
\usepackage[all,import]{xy}
\usepackage[numbers,sort&compress]{natbib}
\usepackage{indentfirst}
\usepackage{fancyhdr}
\usepackage{bbm}
\usepackage{enumerate,tikz}
\usepackage{caption}
\usepackage{accents,cases}
\usepackage{color,tikz}
\usetikzlibrary{matrix}
\usepackage{multirow}
\usepackage[normalem]{ulem}
\useunder{\uline}{\ul}{}
\usepackage{hyperref}
\usepackage{tabularx,booktabs}
\usepackage{caption}
\usepackage{adjustbox}
\usepackage{blindtext}
\usepackage{etex}
\usepackage{amsmath}
\usepackage{amsfonts}
\usepackage{accents,cases}
\usepackage{algorithm} 
\usepackage{algorithmicx}
\usepackage{algpseudocode} 
\usepackage{graphicx} 
\usepackage{epstopdf}
\usepackage{amscd}
\usepackage{amsthm}
\usepackage{amssymb}
\usepackage{bm}
\usepackage{latexsym,float}
\usepackage{makecell}
\usepackage{multicol, multirow}
\usepackage[figuresright]{rotating}
\usepackage{longtable}
\usepackage{booktabs}
\usepackage{lscape}
\usepackage{hyperref}
\usepackage{tikz}
\usetikzlibrary{shapes,arrows}

\usepackage{graphics}
\usepackage{array}
\usepackage{enumerate}
\usepackage{color,tikz}
\usetikzlibrary{calc,intersections}

\usepackage{algorithm,algpseudocode,float}
\usepackage{lipsum}

\makeatletter
\newenvironment{breakablealgorithm}
{% \begin{breakablealgorithm}
		\begin{center}
			\refstepcounter{algorithm}% New algorithm
			\hrule height.8pt depth0pt \kern2pt% \@fs@pre for \@fs@ruled
			\renewcommand{\caption}[2][\relax]{% Make a new \caption
				{\raggedright\textbf{\ALG@name~\thealgorithm} ##2\par}%
				\ifx\relax##1\relax % #1 is \relax
				\addcontentsline{loa}{algorithm}{\protect\numberline{\thealgorithm}##2}%
				\else % #1 is not \relax
				\addcontentsline{loa}{algorithm}{\protect\numberline{\thealgorithm}##1}%
				\fi
				\kern2pt\hrule\kern2pt
			}
		}{% \end{breakablealgorithm}
		\kern2pt\hrule\relax% \@fs@post for \@fs@ruled
	\end{center}
}
\makeatother

\usetikzlibrary{arrows,chains,matrix,positioning,scopes}
\makeatletter
\tikzset{join/.code=\tikzset{after node path={%
			\ifx\tikzchainprevious\pgfutil@empty\else(\tikzchainprevious)%
			edge[every join]#1(\tikzchaincurrent)\fi}}}
\makeatother
\tikzset{>=stealth',every on chain/.append style={join},
	every join/.style={->}}
\tikzstyle{labeled}=[execute at begin node=$\scriptstyle,
execute at end node=$]
\usepackage{wasysym}
\usepackage{hyperref}
\usepackage{pdfsync}
\usepackage{comment}
\usepackage[all]{xy}

\allowdisplaybreaks[4]

\usepackage{tikz-cd}
\usetikzlibrary{matrix, calc, arrows}

\DeclareMathOperator{\coker}{coker}

\usepackage{array}
\newcommand{\PreserveBackslash}[1]{\let\temp=\\#1\let\\=\temp}
\newcolumntype{C}[1]{>{\PreserveBackslash\centering}p{#1}}
\newcolumntype{R}[1]{>{\PreserveBackslash\raggedleft}p{#1}}
\newcolumntype{L}[1]{>{\PreserveBackslash\raggedright}p{#1}}

\renewcommand{\arraystretch}{1.3}

\DeclareMathOperator*{\argmin}{\ensuremath{arg\,min}}
\DeclareMathOperator*{\argmax}{\ensuremath{arg\,max}}
\DeclareMathOperator*{\argopt}{\ensuremath{arg\,opt}}
\DeclareMathOperator*{\argopto}{\ensuremath{arg\,\overline{opt}}}
\DeclareMathOperator*{\argoptt}{\ensuremath{arg\,\widetilde{opt}}}
\DeclareMathOperator*{\median}{\ensuremath{median}}
\DeclareMathOperator*{\sgn}{\ensuremath{Sgn}}
\DeclareMathOperator*{\card}{\ensuremath{Card}}
\DeclareMathOperator*{\sign}{\ensuremath{sign}}
\DeclareMathOperator*{\cone}{\ensuremath{cone}}
\DeclareMathOperator*{\cen}{\ensuremath{center}}
\DeclareMathOperator*{\mean}{\ensuremath{mean}}
\DeclareMathOperator*{\col}{\ensuremath{col}}
\DeclareMathOperator*{\opt}{\ensuremath{opt}}

\makeatletter
\def\wbar{\accentset{{\cc@style\underline{\mskip8mu}}}}
\newcommand{\wcheck}{\raisebox{-1ex}{$\check{}$}}
\makeatother
\def\pd#1#2{\frac{\partial #1}{\partial #2}}
\renewcommand{\vec}[1]{\mbox{\boldmath \small $#1$}}
\def\d{\mathrm{d}}
\def\mi{\mathtt{i}}
\def\sech{\mathrm{sech}} %added by xj
\def\me{\mathrm{e}} %added by xj
\newcommand{\pp}[2]{\frac{\partial{#1}}{\partial{#2}}}
\newcommand{\dd}[2]{\frac{\dif{#1}}{\dif{#2}}}% added by xj
\newcommand{\bmb}[1]{\left(#1\right)}
\newcommand{\mat}[1]{\mathbf{#1}}
\def\disp{\displaystyle}
\def\dif{\mathrm{d}}
\newcommand{\eucn}[1]{\left|{#1}\right|}
\newcommand{\bra}[1]{\left|{#1}\right\rangle}
\newcommand{\ket}[1]{\left\langle{#1}\right|}
\newcommand{\ketbra}[2]{\left\langle{#1}|{#2}\right\rangle}
\newcommand{\trace}{\mathrm{Tr}}
\newcommand{\diag}{\mathrm{diag}}
\newcommand{\cut}{\mathrm{cut}}
\newcommand{\imag}{\mathrm{Im}}
\newcommand{\vol}{\mathrm{vol}}
\newcommand{\real}{\mathrm{Re}}
\newcommand{\ansatz}{\textit{ansatz}{ }}
\newcommand{\videpost}{\textit{vide post}{}}
\newcommand{\etc}{\textit{etc}{}}
\newcommand{\etal}{\textit{et al}{}}
\newcommand{\ie}{\textit{i.e.}{~}}
\newcommand{\eg}{\textit{e.g.}{~}}
\newcommand{\vs}{\textit{v.s.}{~}}
\newcommand{\randn}{\ensuremath{\mathrm{randn}}}
\newcommand{\dist}{\ensuremath{\mathrm{dist}}}
\newcommand{\score}{\ensuremath{\mathrm{score}}}
\newcommand{\labeling}{\ensuremath{\mathrm{lab}}}

\newcommand{\vsigma}{\mbox{\boldmath$\sigma$}}
\newcommand{\vgamma}{\vec{\gamma}}
\newcommand{\vGamma}{\vec{\Gamma}}
\newcommand{\vPsi}{\vec{\Psi}}
\newcommand{\vPhi}{\vec{\Phi}}
\newcommand{\vpsi}{\vec{\psi}}
\newcommand{\veta}{\vec{\eta}}
\newcommand{\vJ}{\vec{J}}
\newcommand{\vT}{\vec{T}}
\newcommand{\vK}{\vec{K}}
\newcommand{\vE}{{E}}
\newcommand{\vI}{\vec{I}}
\newcommand{\vdelta}{{\delta}}
\newcommand{\vell}{{\ell}}
\newcommand{\power}{\ensuremath{\mathcal{P}}}

\newcommand{\cL}{{\cal L}}

\newcommand{\shao}[1]{\textcolor{red}{#1}}

\numberwithin{equation}{section}

\theoremstyle{plain}

\newtheorem{defn}{Definition}

\newtheorem{prop}{Proposition}

\def\Lindelof{Lindel\"of}
\def\Holder{H\"older}

\newcommand{\blue}{\textcolor{blue}}

\begin{document}
	\bibliographystyle{plain} %plain apalike alpha unsrt
	%\title{A Note on $1$-spectral clustering}
	%\title{Dual Cheeger Constant and Inverse Power Method for Max Cut}
	\title{A Parallel Evolutionary Algorithm Framework for Graph $k$-CUT Problems}
	\author{Sihong Shao\footnotemark[1],
		\and Chuan Yang\footnotemark[2]
	}
	\renewcommand{\thefootnote}{\fnsymbol{footnote}}
	\footnotetext[1]{CAPT, LMAM and School of Mathematical Sciences, Peking University, Beijing 100871, China.
		Email: {\tt sihong@math.pku.edu.cn} (Sihong Shao).
	}
	\footnotetext[2]{School of Mathematics and Statistics, Fuzhou University, Fuzhou 350108, China, and School
of Mathematical Sciences, Peking University, Beijing 100871, China.
		Email: {\tt chuanyang@fzu.edu.cn} (Chuan Yang).
	}
	
	%\footnotetext[1]{To
		%whom correspondence should be addressed. Email:
		%\texttt{sihong@math.pku.edu.cn}}
	%\date{November 13, 2018}
	\date{}
	\maketitle
	
	\begin{abstract}
    Graph $k$-CUT problems include many important variants whose objectives combine
cut value, volume, and cardinality terms in different ways. Most existing algorithms are
designed for individual formulations, which limits their transferability across
related models. In this paper, we organize a broad family of graph partitioning
problems into two classes, MaxGCP and MinGCP, according to their optimization
orientation and balance-related structure. Based on this classification, we propose a unified
Parallel Evolutionary Algorithm Framework (PEAF). This framework combines structure-inheriting crossover operators, a hierarchical
mutation mechanism based on the Multiple Mutation Heuristic (MMH) and the Auxiliary Cut Mutation Heuristic (ACMH), and a diversity-preserving selection
strategy. Extensive experiments on G-set with $k \in \{2,3,4,5\}$ show that
PEAF-ACMH consistently outperforms Gurobi on nine representative $k$-CUT
problems. For MaxGCP, PEAF-ACMH improves several best-known solutions
for \textsc{Max}-$k$-\textsc{Cut} with $k \ge 3$, and through numerical bounds
derived from its relation to \textsc{Max}-$k$-\textsc{Cut}, verifies the high
quality of the obtained solutions for \textsc{Judicious}-$k$-\textsc{Partition}
and \textsc{AntiCheeger}-$k$-\textsc{Cut}. The results further indicate that
\textsc{Judicious}-$k$-\textsc{Partition} usually yields more balanced
partitions than \textsc{AntiCheeger}-$k$-\textsc{Cut}. For MinGCP, theoretical and computational comparisons show that
\textsc{Cheeger}-$k$-\textsc{Cut} and \textsc{Sparsest}-$k$-\textsc{Cut} produce
more balanced partitions than \textsc{Normalized}-$k$-\textsc{Cut} and
\textsc{Ratio}-$k$-\textsc{Cut}, respectively. PEAF-ACMH also obtains highly
similar partitions for \textsc{Min}-$k$-\textsc{Cut} and
\textsc{MinMax}-$k$-\textsc{Cut} within short running times, providing numerical
evidence for their structural affinity. These results demonstrate that PEAF
is both an effective unified solver and a useful tool for revealing structural
properties of graph $k$-CUT models.

		\vspace{0.3cm}
		
		\noindent\textbf{Keywords:}
        Evolutionary algorithm;
		Cheeger cut;
		Maxcut;
		Normalized cut;
		Mincut;
		Ratio cut;
		Minmax cut;
		Sparsest cut;
		Judicious partition;
		AntiCheeger cut
		
		\vspace{0.3cm}
		
		\noindent\textbf{AMS subject classifications:}
        68W50;
        90C27;
        05C85;
        90C59;
        68W10
        
		%05C85;
		%90C27;
		%58C40;
		%35P30;
		%05C50

	\end{abstract}
	
	\tableofcontents
	\listoftables

	\section{Introduction}
	\label{sec:intro}

	Graph $k$-CUT problems aim to partition the vertices of a graph into $k$ ($k \geq 2$) disjoint subsets by removing the edges among them. These problems encompass a variety of graph cut types, including \textsc{Max}-$k$-\textsc{Cut} \cite{kann1997hardness}, \textsc{Judicious}-$k$-\textsc{Partition} \cite{bollobas2002problems}, \textsc{AntiCheeger}-$k$-\textsc{Cut} \cite{xu2016graph}, \textsc{Min}-$k$-\textsc{Cut} \cite{goldschmidt1994polynomial}, \textsc{MinMax}-$k$-\textsc{Cut} \cite{chandrasekaran2023fixed}, \textsc{Normalized}-$k$-\textsc{Cut} \cite{shi2000normalized}, \textsc{Ratio}-$k$-\textsc{Cut} \cite{dhillon2007weighted}, \textsc{Cheeger}-$k$-\textsc{Cut} \cite{louis2012many}, and \textsc{Sparsest}-$k$-\textsc{Cut} \cite{arora2009expander,bresson2013multiclass}. For simplicity, we categorize these problems into two main groups: Maximal Graph Cut Problems (MaxGCP) and Minimal Graph Cut Problems (MinGCP) (see Definitions \ref{def:maxgcp} and \ref{def:mingcp}). The distinction is based on the preference for larger cuts in MaxGCP and smaller cuts in MinGCP. Both categories have garnered significant attention due to their wide-ranging applications. 	MaxGCP are particularly valuable in fields such as statistical physics \cite{barahona1988application, liers2004computing}, VLSI circuit layout design \cite{chang1987efficient, cho1998fast}, and machine learning \cite{jun2013semi}. MinGCP are extensively applied in image segmentation \cite{boykov2001interactive, estrada2005quantitative, boykov2006graph}, stereo vision \cite{buehler2002minimal, ishikawa1998occlusions}, clustering \cite{bresson2013multiclass, dhillon2004kernel, pei2020efficient}, community detection \cite{newman2013spectral, mu2019multi}, data classification \cite{merkurjev2017modified}, and data mining \cite{hochbaum2016sparse}.

	With the exception of \textsc{Min}-$k$-\textsc{Cut} \cite{goldschmidt1994polynomial} and \textsc{MinMax}-$k$-\textsc{Cut} \cite{chandrasekaran2023fixed} (for fixed $k$), the aforementioned nine graph $k$-cut problems are NP-hard \cite{kann1997hardness, shahrokhi1994complexity, shao2021continuous, shi2000normalized, hagen1992new, hochbaum2013polynomial, matula1990sparsest}. For these NP-hard variants, exact algorithms are often restricted to small-scale instances due to their exponential computational complexity. For instance, the branch-and-bound approach proposed by Krislock et al. \cite{krislock2014improved}, despite incorporating quadratic regularization and quasi-Newton methods, is limited to \textsc{Max}-$2$-\textsc{Cut} on graphs with fewer than 500 vertices. Consequently, heuristic methods are indispensable for tackling large-scale sparse graphs encountered in real-world applications. Significant efforts have been directed toward developing robust heuristics for these challenging problems. Within the MaxGCP framework, diverse strategies have been proposed for \textsc{Max}-$2$-\textsc{Cut}, ranging from tabu search \cite{arraiz2009competitive} and scatter search \cite{Marti2009} to GRASP, variable neighborhood search (VNS), and path-relinking \cite{Festa2002randomized}. Other notable approaches include simple iterative algorithms \cite{shao2009maxcut} and rank-two relaxation heuristics \cite{burer2002rank}. Similarly, for MinGCP variants such as \textsc{Ratio}-$k$-\textsc{Cut} and \textsc{Normalized}-$k$-\textsc{Cut}, Palubeckis \cite{palubeckis2022metaheuristic} introduced multistart simulated annealing and memetic algorithms. Furthermore, recent advancements for \textsc{Cheeger}-$2$-\textsc{Cut} feature breakout local search \cite{lu2019stagnation}, hybrid evolutionary algorithms \cite{lu2020hybrid}, and parallel Markov Chain Monte Carlo methods \cite{chen2025monte}.

	Research on heuristic algorithms for various graph cut problems has been disproportionate, leaving a notable gap between theoretical investigation and algorithmic development. For instance, while \textsc{Max}-$k$-\textsc{Cut} has been extensively studied, numerical results for \textsc{Judicious}-$k$-\textsc{Partition} remain scarce despite decades of combinatorial interest \cite{bollobas2002problems, bollobas2004judicious, lee2016judicious, hou2021bisections}. To bridge this gap and establish a unified computational framework, we propose an efficient parallel evolutionary algorithm framework for generalized $k$-cut problems. Our approach addresses variables defined by boundaries~\eqref{eq::boundary}, volumes~\eqref{eq::volume}, and cardinalities~\eqref{eq::card}, which are systematically categorized into MaxGCP and MinGCP (see Section~\ref{sec::notation}).

	The primary contributions of this work are summarized as follows:	
	\begin{itemize}
		\item \textbf{A High-Performance Multiple Mutation Heuristic (MMH):} We develop MMH (see Algorithm~\ref{algorithm::combinatorial} and Section~\ref{sec::mutation}) as a versatile search engine for generalized $k$-\textsc{Cut} problems. MMH integrates nine search operators organized into three hierarchies: local search ($\widetilde{O}_1, \widetilde{O}_2, \widehat{O}_1, \widehat{O}_2$), tabu search ($\widetilde{O}_3, \widetilde{O}_4, \widehat{O}_3, \widehat{O}_4$), and strong random perturbation ($O_5$) (see Sections~\ref{sec::o1o4}, \ref{sec::tabu} and \ref{sec::perturb}). The efficiency is underpinned by a well-designed bucket sorting structure that maintains two stable move-gain matrices (see Eqs.~\eqref{delta::dcut} and \eqref{delta::dpartial}), enabling rapid tracking of changes in boundaries~\eqref{eq::boundary} and cut values~\eqref{eq::cut-value}. Furthermore, we establish the computational complexities (Proposition~\ref{thm::o1comp}) and provide quality guarantees specifically for \textsc{Max}-$k$-\textsc{Cut} and \textsc{Min}-$k$-\textsc{Cut} (Proposition~\ref{thm::maxkcut}).		
		\item \textbf{An Enhanced Auxiliary Cut Mutation Heuristic (ACMH):} To improve search robustness, we propose ACMH (see Section~\ref{sec::cut-combined} and Algorithm~\ref{algorithm::acmh}), an advanced variant of MMH. ACMH adaptively rotates between the primary objective and strategically selected auxiliary cut problems. This multi-tasking mutation strategy facilitates escaping local optima by observing whether superior solutions to the primary problem emerge from the landscape of auxiliary tasks.		
		\item \textbf{A Synergized Parallel Evolutionary Framework (PEAF):} We integrate the proposed heuristics into the mutation phase in PEAF (see Algorithm~\ref{algorithm::framework}), resulting in two algorithms: PEAF-MMH and PEAF-ACMH. This framework coordinates three distinct crossover mechanisms (see Section~\ref{sec::crossover}) involving five operators: $C_1$ utilizes a scoring-based inheritance (Section~\ref{sec::c1}); $C_{2,1}$ and $C_{2,2}$ employ greedy subgraph reconstruction (Section~\ref{sec::c2c3}); and $C_{3,1}, C_{3,2}$ implement path-relinking to explore mutual paths between reference solutions (Section~\ref{sec::c4c5}). Finally, a solution pool selection mechanism (Section~\ref{sec::update} and Algorithm~\ref{algorithm::selection}). These three phases work together to inherit shared structural characteristics from parent solutions while maintaining a robust balance between population quality and genetic diversity.	
\item \textbf{Extensive Benchmarking and Structural Discovery:}
        Comprehensive experiments on G-set\footnote{Downloaded from \href{https://web.stanford.edu/~yyye/yyye/Gset/}{https://web.stanford.edu/$\sim$yyye/yyye/Gset/}} for nine $k$-cut variants 
($k \in \{2,3,4,5\}$) demonstrate that PEAF-ACMH consistently outperforms the
Gurobi solver. For MaxGCP, PEAF-ACMH improves a number of best-known
solutions for \textsc{Max}-$k$-\textsc{Cut} with $k \ge 3$ reported in
\cite{Ma2015}. More importantly, by using these best-known
\textsc{Max}-$k$-\textsc{Cut} values as reference results and carefully exploiting
the close relationships among \textsc{Max}-$k$-\textsc{Cut},
\textsc{Judicious}-$k$-\textsc{Partition}, and
\textsc{AntiCheeger}-$k$-\textsc{Cut}, we derive numerical bounds
(Eqs.~\eqref{neq:jp} and \eqref{neq:ac}) that validate the high quality of the
solutions obtained for the latter two problems
(Tables~\ref{tab:jp-quality} and \ref{tab:ac-quality}). These results further
show that \textsc{Judicious}-$k$-\textsc{Partition} usually produces more
balanced partitions than \textsc{AntiCheeger}-$k$-\textsc{Cut}
(Section~\ref{sec::result-anticheeger}). For MinGCP, our theoretical and computational analyses show that
\textsc{Cheeger}-$k$-\textsc{Cut} and \textsc{Sparsest}-$k$-\textsc{Cut} tend to
produce more balanced partitions than \textsc{Normalized}-$k$-\textsc{Cut} and
\textsc{Ratio}-$k$-\textsc{Cut}, respectively
(Sections~\ref{sec::cheeger-k-cut} and \ref{sec::sparsest-k-cut}). We also
observe that PEAF-ACMH obtains very similar partitions for
\textsc{Min}-$k$-\textsc{Cut} and \textsc{MinMax}-$k$-\textsc{Cut}, while solving
both problems within very short running times. This provides numerical evidence
for the structural affinity between the two problems. It is also consistent with
the known complexity landscape: \textsc{Min}-$k$-\textsc{Cut} has long been known
to be polynomial-time solvable \cite{goldschmidt1994polynomial}, whereas the
corresponding complexity of \textsc{MinMax}-$k$-\textsc{Cut} was settled only
recently \cite{chandrasekaran2023fixed}. 
% These observations indicate that
% PEAF-ACMH is not only competitive computationally, but also useful for exposing
% and validating structural properties of combinatorial $k$-cut problems.
% 		\item \textbf{Extensive Benchmarking and Structural Discovery:} Comprehensive experiments on G-set\footnote{Downloaded from \href{https://web.stanford.edu/~yyye/yyye/Gset/}{https://web.stanford.edu/$\sim$yyye/yyye/Gset/}} for nine $k$-cut variants ($k \in \{2,3,4,5\}$) demonstrate that PEAF-ACMH consistently outperforms the Gurobi solver. Notably, for \textsc{Max}-$k$-\textsc{Cut} ($k \ge 3$), our results refresh a number of best-known solutions in \cite{Ma2015}. Leveraging these results, we derive numerical bounds (Eqs.~\eqref{neq:jp} and \eqref{neq:ac}) that confirm the superior performance of PEAF-ACMH for \textsc{Judicious}-$k$-\textsc{Partition} and \textsc{AntiCheeger}-$k$-\textsc{Cut} (see Tables~\ref{tab:jp-quality} and \ref{tab:ac-quality}). Furthermore, our high-quality results reveal that \textsc{Judicious}-$k$-\textsc{Partition} typically achieves higher balance than \textsc{AntiCheeger}-$k$-\textsc{Cut} (see Section \ref{sec::result-anticheeger}), and both \textsc{Cheeger}-$k$-\textsc{Cut} and \textsc{Sparsest}-$k$-\textsc{Cut} provide more balanced partitions than \textsc{Normalized}-$k$-\textsc{Cut} and \textsc{Ratio}-$k$-\textsc{Cut}, respectively (see Sections~\ref{sec::cheeger-k-cut} and \ref{sec::sparsest-k-cut}).
	\end{itemize}
	
	The rest of this paper is organized as follows. Section~\ref{sec::notation} introduces the mathematical formulations and notations for the nine graph $k$-CUT problems. Section~\ref{sec::peaf} details the proposed parallel evolutionary algorithm framework, including crossover, mutation and selection mechanisms. In Section~\ref{sec::experiment}, we conduct extensive numerical experiments on the G-set and analyze the results. Finally, Section~\ref{sec::conclusion} concludes the paper and discusses potential avenues for future research.

	\section{Graph $k$-CUT problems}
	\label{sec::notation}

	Given an undirected graph $G=(V,E)$, $V=[n] := \{1,\ldots,n\}$ and $E\subset V\times V$ are the sets of  vertices and edges, respectively. Each edge $\{u,v\}\in E$ is associated with a positive integer weight, denoted as $w_{uv}\in\mathbb{Z}^+$, and $d_u=\sum_{\{u,v\}\in E}w_{uv}$ represents the degree of the $u$-th vertex. For a nonempty subset $S\subset V$, $\vol(S)$ sums the degrees of all vertices in $S$
	and $|S|$ gives the number of vertices contained in $S$. Let $\vec S$ be a family of nonempty subsets of $V$ and $E(\vec S)$  collect all boundaries among these subsets
	\begin{equation}
		E(\vec S)=\left\{\{u,v\}\in E:\, u\in I, v\in J, \forall\, I\ne J, I\in\vec S, J\in\vec S\right\}.
	\end{equation}
	Accordingly, we define the boundary of $S$, $\partial S = E(\{S, S^c\})$, with $S^c$ being the complement of $S$ in $V$,
	and 
	\begin{equation}
		\cut(\partial S) = \sum_{\{u,v\}\in \partial S} w_{uv}.
	\end{equation}

	We call $E(\vec S)$ a $k$-CUT of $G$ if $\vec S= \{S_1,\ldots,S_k\}$ partitions $V$ into $k~(\ge 2)$ disjoint nonempty subsets,
	and the corresponding cut value is 
	\begin{equation}
		\label{eq::cut-value}
		\cut(\vec S) = \sum_{\{u,v\}\in E(\vec S)} w_{uv},
	\end{equation}
	which happens to be the objective function for both \textsc{Max}-$k$-\textsc{Cut} \eqref{eq::max-k-cut} and \textsc{Min}-$k$-\textsc{Cut} \eqref{eq::min-k-cut}. For any partition $\vec S=\{S_1,\ldots,S_k\}$, we call its subset $S_p$ a cut segment.
	Moreover, the set of all $k$-CUTs of $G$ is denoted as $\mathbb{E}$.
	For the partition $\vec S= \{S_1,\ldots,S_k\}$ of $V$, we make the following conventions 
	\begin{align}
		\cut(\partial \vec S)&=\{\cut(\partial S_1), \cut(\partial S_2),\dots, \cut(\partial S_k)\},\label{eq::boundary}\\
		\vol(\vec S)&=\{\vol(S_1),\vol(S_2),\ldots,\vol(S_k)\},\label{eq::volume}\\
		|\vec S|&=\{ |S_1|,|S_2|,\ldots,|S_k|\}.\label{eq::card}
	\end{align}
	%	is  defined as a partition of $V$ into $k~(\ge 2)$ disjoint nonempty subsets $S_1\cup S_2\cup\ldots\cup S_k=V$, denoted as
	%	\begin{equation}
		%		\{S_1,\ldots,S_k\}=\left\{\{u,v\}\in E:\, u\in S_i,\,i\in [k],\,v\in S_j,\,j\in [k]\backslash\{i\}\right\}.
		%	\end{equation}
	%A natural measure for the cut value of $\{S_1,\ldots,S_k\}$ is to sum the weights of all its edges---
	%%		\begin{equation}
		%%			\label{eq::cut-value}
		%%			\cut(\{S_1,\ldots,S_k\})=\sum_{1\leq p<q\leq k}\sum_{i\in S_p,\,j\in S_q}w_{ij},
		%%		\end{equation}
	%		which clearly serves as the objective function for both MAX $k$-CUT \eqref{eq::max-k-cut} and \textsc{Min}-$k$-\textsc{Cut} \eqref{eq::min-k-cut}. 
	The objective functions of nine graph $k$-CUT problems studied in this paper are all composites of the aforementioned three set-valued functions towards the partition $\vec S=\{S_1,S_2,\ldots,S_k\}$ of $V$ and can be rewritten into the following abstract form 
	\begin{equation}
		\label{obj}
		F\left(\cut(\partial \vec S),\vol(\vec S),|\vec S|\right),
	\end{equation}
	%		where $\vol(\vec S)$ and $|\vec S|$ may be excluded (e.g. MAX $k$-CUT), but $\cut(\partial \vec S)$ must be included, owing to the fact that $\cut(\partial \vec S)$ directly characterize how closely the partitioned vertex subsets corresponds to each other. Hereafter, we use $f(A(\vec S))$ as the objective function for simplicity, where $\cut(\partial\vec S)\subseteq A(\vec S)\subseteq \cut(\partial \vec S)\cup\vol(\vec S)\cup|\vec S|$ represents the set of actual arguments required for the partition $\vec S$.
	with the corresponding optimization problems being 
	\begin{equation}
		\label{def:k-cut}
		\opt\limits_{\vec S\in\mathbb{E}} F(\cut(\partial \vec S),\vol(\vec S),|\vec S|), \quad \opt\in\{\min,\max\}.
	\end{equation}
	To be more specific, the nine graph $k$-CUT problems can be formulated as follows.
	\begin{align}
		\text{\textsc{Max}-$k$-\textsc{Cut}}&:&\max_{\vec S\in\mathbb{E}} \frac{1}{2}\sum_{p=1}^k \cut(\partial S_p).\label{eq::max-k-cut}\\
		\text{\textsc{Judicious}-$k$-\textsc{Partition}}&:&\min_{\vec S\in\mathbb{E}} \frac{1}{2}\max_{1\leq p\leq k} \{\vol(S_p)-\cut(\partial S_p)\}.\label{eq::jp-k-cut}\\
		\text{\textsc{AntiCheeger}-$k$-\textsc{Cut}}&: &\max_{\vec S\in\mathbb{E}}\min_{1\leq p\leq k}\left\{\frac{\cut(\partial S_p)}{\vol(S_p)}\right\}.\label{eq::ah-k-cut}\\
		\text{\textsc{Min}-$k$-\textsc{Cut}}&:&\min_{\vec S\in\mathbb{E}} \frac{1}{2}\sum_{p=1}^k \cut(\partial S_p).\label{eq::min-k-cut}\\
		\text{\textsc{MinMax}-$k$-\textsc{Cut}}&:&\min_{\vec S\in\mathbb{E}} \max_{1\leq p\leq k} \{\cut(\partial S_p)\}.\label{eq::minmax-k-cut}\\
		\text{\textsc{Normalized}-$k$-\textsc{Cut}}&:&\min_{\vec S\in\mathbb{E}}\sum_{p=1}^k \frac{\cut(\partial S_p)}{\vol(S_p)}.\label{eq::normalized-k-cut}\\
		\text{\textsc{Ratio}-$k$-\textsc{Cut}}&:&\min_{\vec S\in\mathbb{E}}\sum_{p=1}^k \frac{\cut(\partial S_p)}{|S_p|}.\label{eq::ratio-k-cut}\\
		\text{\textsc{Cheeger}-$k$-\textsc{Cut}}&:&\min_{\vec S\in\mathbb{E}}\max_{1\leq p\leq k} \left\{\frac{\cut(\partial S_p)}{\min\{\vol(S_p),\vol(S_p^c)\}}\right\}.\label{eq::cheeger}\\
		\text{\textsc{Sparsest}-$k$-\textsc{Cut}}&:&\min_{\vec S\in\mathbb{E}}\sum_{p=1}^k \frac{\cut(\partial S_p)}{\min\{(k-1)|S_p|,|S_p^c|\}}.\label{eq::sparsest}
	\end{align}
	For simplicity, we abbreviate Problem~\eqref{def:k-cut} to ``$\opt\,\, F(\vec S)$'' hereafter.
	
	It can be readily observed from Eqs.~\eqref{eq::max-k-cut}-\eqref{eq::sparsest} that the nine objective functions all contain the set-valued function $\cut(\partial \vec S)$ which directly characterizes how closely the partitioned vertex subsets corresponds to each other. Using this fact, we would like to 
	categorize those nine graph cut problems into two types, for which we replace $\cut(\partial\vec S)$ with $\vec x=(x_1,x_2,\ldots,x_k)\in\mathbb{R}^k$ to extend the discrete function $F$ from $\mathbb{E}$ to $\mathbb{R}$ into a continuous function $\bar{F}$ from $\mathbb{R}^k$ to $\mathbb{R}$:
	\[
	\bar{F}(\vec x): =F(\vec x,\vol(\vec S),|\vec S|).
	\]
	Furthermore, we are able to define the partial derivative of $\bar{F}$ at the point $\vec x=(x_1,x_2,\ldots,x_k)\in\mathbb{R}^k$ with respect to the $p$-th variable $x_p$
	\begin{equation}
		\label{derivate:g}\nabla_p\, \bar{F}=\lim\limits_{h\rightarrow 0}\frac{\bar{F}(\ldots,x_{p-1},x_p+h,x_{p+1}\ldots)-\bar{F}(\ldots,x_{p-1},x_p,x_{p+1}\ldots)}{h},\quad \forall\,p\in [k].
	\end{equation}
	Finally,  the aforementioned nine problems can be categorized into two types: Maximal Graph Cut Problems (MaxGCP) and Minimal Graph Cut Problems (MinGCP). 
	\begin{defn}
		\label{def:maxgcp}
		A $k$-CUT problem belongs to MaxGCP if either $\nabla_p\, \bar{F}\leq 0$, $\forall\,\vec x\in\mathbb{R}^k$, $\forall\,p\in[k]$ holds for $\opt=\min$; or $\nabla_p\, \bar{F}\geq 0$, $\forall\,\vec x\in\mathbb{R}^k$, $\forall\,p\in[k]$ holds for $\opt=\max$.
	\end{defn}
	\begin{defn}
		\label{def:mingcp}
		A $k$-CUT problem belongs to MinGCP if either $\nabla_p\, \bar{F}\geq 0$, $\forall\,\vec x\in\mathbb{R}^k$, $\forall\,p\in[k]$ holds for $\opt=\min$; or $\nabla_p\, \bar{F}\leq 0$, $\forall\,\vec x\in\mathbb{R}^k$, $\forall\,p\in[k]$ holds for $\opt=\max$.
	\end{defn}		
	It is easy to verify that the first three cut problems in Eqs.~\eqref{eq::max-k-cut}-\eqref{eq::ah-k-cut} pertain to MaxGCP, and the remaining six in Eqs.~\eqref{eq::min-k-cut}-\eqref{eq::sparsest} belong to MinGCP. That is, no matter the balanced indicators --- volume~\eqref{eq::volume} and cardinality~\eqref{eq::card} show up or not in the objective functions, the basic pursuit of MaxGCP is to find solutions with large cut values~\eqref{eq::cut-value} and close boundary~\eqref{eq::boundary} connections, whereas MinGCP favors solutions with small cut values and loose boundary connections. Considering \textsc{Normalized}-$k$-\textsc{Cut}~\eqref{eq::normalized-k-cut} as an example, we have $\opt=\min$, $\bar{F}(\vec x)=\sum_{p=1}^k\frac{x_p}{\vol(S_p)}$ and $\nabla_p\, \bar{F}\geq 0$, $\forall\,\vec x\in\mathbb{R}^k$, $\forall\,p\in[k]$, suggesting that minimizing $\bar{F}$ requires smaller values of $\vec x$. That implies the objective function~\eqref{obj} and the boundary sizes~\eqref{eq::boundary} either increase or decrease simultaneously, meaning that reduced values of $\cut(\partial\vec S)$ are more likely to yield improved objective function values for \textsc{Normalized}-$k$-\textsc{Cut}.

	\section{Parallel evolutionary algorithm framework}
	\label{sec::peaf}
	
	%\subsection{General working procedure}
	
	This paper presents a parallel evolutionary algorithm framework PEAF for solving the graph $k$-CUT problems that satisfy $\opt\, F\in\{\text{MaxGCP},\text{MinGCP}\}$. The primary objective of this framework is to maintain a population of high-quality solutions. Initially, PEA randomly selects pairs of individuals from the population and applies multiple crossover operators to exchange and combine their excellent features. Along this line, various mutation operators are employed to explore new regions of the solution space, thereby facilitating the potential discovery of improved solutions. Both the crossover and mutation processes are executed in parallel to enhance the computational efficiency. The formation of the new population involves selecting solutions from the union of the original population and the newly generated offspring, which ensures that the new population is not only of high quality but also of great diversity. The oveall procedure is outlined in Algorithm~\ref{algorithm::framework}.

	\begin{algorithm}[hbtp]
		\caption{A parallel evolutionary algorithm framework (PEAF) for $\opt F$ on $G=(V,E)$.} \label{algorithm::framework}
		\begin{algorithmic}[1] \small
			\Require The population size $m$, the number of possible solution pairs $r$, the time limit for stopping. 					\Ensure An approximate solution $I^*$ to the graph $k$-CUT problem $\opt F$ on $G$
			\State \textbf{Initialization}: $\mathbb{I}\gets\Call{generate\_population}{G,m}$ \Comment{Section \ref{sec::initial}} 
			\While {the stopping condition is not satisfied}
			\State $\mathbb{C}\gets\Call{parallel\_random\_crossover}{\mathbb{I},r}$ 
			\Comment{Section \ref{sec::crossover}}
			\State $ \mathbb{M}\gets\Call{parallel\_mutation}{\mathbb{C}}$ 
			\Comment{Section \ref{sec::mutation}}
			\State $\mathbb{I}\gets \Call{update\_population}{\mathbb{M}\cup\mathbb{I}}$ 
			\Comment{Section \ref{sec::update}}
			\EndWhile
			\State $I^*\gets \Call{best}{\mathbb{I}}$
			\State \Return{$I^*$}
		\end{algorithmic}  
	\end{algorithm}

	The initial population $\mathbb{I}$, whose generation is detailed in Section~\ref{sec::initial}, comprises $m=|\mathbb{I}|$ solutions, and each individual is a partition of $V$. Algorithm \ref{algorithm::framework} iterates repeatedly until a specified time limit is reached and  outputs the best solution $I^*$ in the final solution pool. In each iteration, we first randomly pick $r$ pairs of reference solutions from the current population, and each pair is applied with a randomly selected crossover operator (see Section~\ref{sec::crossover}) to generate two offspring solutions that preserve the common excellent traits. These $r$ runs are executed in parallel and output the offspring solution set $\mathbb{C}$ with $|\mathbb{C}|=2r$. Each solution of $\mathbb{C}$ is input to the mutation phase (see Section \ref{sec::mutation}) to pursue deep improvement. All these $2r$ runs are also executed in parallel and output the offspring solution set $\mathbb{M}$ with $|\mathbb{M}|=2r$. The new population for the next iteration is selected from $\mathbb{M}\cup\mathbb{I}$, the merged set of the existing solution pool and $\mathbb{M}$, as detailed in Section \ref{sec::update}.
	
	%		Furthermore, each solution  $\{I_{l_{\tau}}^{\prime}\}_{\tau=1}^p\cup \{I_{r_{\tau}}^{\prime}\}_{\tau=1}^p$ is employed with nine mutation operators (see Section \ref{sec::mutation}). The flexibility of the general algorithm scheme allows for enhancements in the mutation phase through slight modifications, which have been experimentally validated to increase efficiency (see Section \ref{sec::cut-combined}). The offspring solutions $\{I_{l_{\tau}}^{*}\}_{\tau=1}^p\cup \{I_{r_{\tau}}^{*}\}_{\tau=1}^p$ generated after the mutation phase are then merged with the existing solution pool $\{I_t\}_{t=1}^m$. The new population for the next generation is selected using the updating rule described in Section \ref{sec::update}.

	\subsection{Initial population}
	\label{sec::initial}
	
	We adopt the following random initialization procedure: $k$ different vertices are randomly selected and allocated separately to $k$ subsets (thus empty subsets are avoided), then each of the remaining vertices is randomly assigned to one of those $k$ subsets. A special care should be taken sometimes for the isolated vertices in MinGCP. 
	For example, if the number of isolated vertices is no less than $k$ in \textsc{Min}-$k$-\textsc{Cut}, \textsc{MinMax}-$k$-\textsc{Cut}, \textsc{Ratio}-$k$-\textsc{Cut}, and \textsc{Sparsest}-$k$-\textsc{Cut}, it will result in a trivial solution with the objective function value being $0$. Thus, an additional step should be included to examine the number of isolated vertices.

	% for these problems.
	%
	%
	%
	%can be significantly impacted by the presence of isolated vertices. If its number is at least $k$, this will result in a trivial case with the objective function value being $0$. Thus, an additional step is included to examine the number of isolated vertices for these problems.} 

\subsection{Crossover}
\label{sec::crossover}

%The purpose of crossover is to produce offspring with exploration prospects by inheriting the favorable characteristics embedded in the reference solutions, in order to dig into the potential of the population as a whole, rather than relying solely on individuals. As the Algorithm \ref{algorithm::framework} progresses, the quality of the solution pool improves. Therefore, the shared traits of the references can be considered as indicators for obtaining high quality solutions to some extent. Consequently, the development of crossover operators focuses on identifying the shared components and addressing the differing parts.
%The shared components between reference solutions are defined as the points classified into the same categories. Prior to applying crossover operators, it is crucial to establish a one-to-one correspondence between the segmentations, which ensures the production of high-quality offspring. 

%The crossover operator produces offspring solutions by 

In order to combine shared traits from the reference (parent) solutions, one usually adopts some crossover operators to produce offspring solutions.
Before that, it is crucial to establish a one-to-one mapping among the cut segments of the reference solutions to identify the shared component, which is defined to be the induced subgraph of $G$ on the shared vertices that appear in the same labeled cut segments. Specifically, for any two reference solutions, $I^1=\{S_1^1,S_2^1,\ldots,S_k^1\}$ and $I^2=\{S_1^2,S_2^2,\ldots,S_k^2\}$, with one-to-one mapping $S_p^2\rightarrow S_p^1$, $\forall\,p\in[k]$, the shared component is denoted as $G[V_0]$ with $V_0=\cup_{p=1}^k(S_p^1\cap S_p^2)$ representing the union of the shared vertices and forms the backbone of $I^1$ and $I^2$. The shared component $G[V_0]$ is considered the combination of advantageous features (i.e.~the shared vertices and their connecting edges) from ${I}^1$ and ${I}^2$, and should be preserved in the offspring solutions, denoted by $\bar{I}^1$ and $\bar{I}^2$. This implies that the segment labels of the shared vertices remain unchanged in the offspring solutions, namely, 
\begin{equation}
	\label{eq:shared}
	\labeling(v,\bar{I}^1)=\labeling(v,\bar{I}^2)=\labeling(v, I^1)=\labeling(v,I^2),\quad \forall\, v\in V_0=\cup_{p=1}^k(S_p^1\cap S_p^2),
\end{equation}
where the segment label of the vertex $v$ for any solution $I=\{S_1,\ldots,S_k\}$ is denoted as $\labeling(v,I)$, which is equal to $p$ if $v\in S_p$. Therefore, the major role of any crossover operator is to determine the segment labels of the remaining vertices $V_0^c$ in the offspring solutions.

The first task is to establish the one-to-one mapping among the cut segments of the reference solutions. In order to enlarge the shared component as wide as possible, we tend to maximize the number of the shared vertices in a greedy manner, borrowing the idea in the greedy partition crossover operators utilized in graph segmentation \cite{Benlic2011partition,Wu2012} and graph coloring \cite{dorne1998new,galinier1999hybrid,hao2012memetic}. Here are the specific steps. Firstly, compute a matrix $\mathcal{S}\in\mathbb{N}^{k\times k}$ with each element being $\mathcal{S}_{ij}=|S_i^1\cap S_j^2|$. Secondly, match a position pair by $(i^*,j^*)\in\argmax_{ij}\{\mathcal{S}_{ij}\}$, 
ignore the $i^*$-th row and $j^*$-th column and repeat this step on the remaining part of $\mathcal{S}$ until all $k$ pairs are established. Thirdly, for each pair $(i,j)$, rearrange the $j$-th cut segment $S_j^2$ in $I^2$ to be the $i$-th segment $S_i^2$ and make the mapping $S_i^2\rightarrow S_i^1$. An example on this mapping process is given in Figure~\ref{fig::example}.

%For the graph in Figure~\ref{fig::example} and the reference solutions $I^1=\{\{a,b,c,d\},\{e,f\},$ $\{g,h,i\}\}$ plus $I^2=\{\{a,c\},\{b,f,g,h\},\{d,e,i\}\}$, we rearrange $I^2$ by $S_1^2=\{a,c\}$, $S_2^2=\{d,e,i\}$ and $S_3^2=\{b,f,g,h\}$, indicating the union of shared vertices is $\{a,c,e,g,h\}$. 

\begin{figure}[htbp]
	\centering
	\begin{tikzpicture}[scale=1.2]
		\tikzset{mypoints/.style={fill=white,draw=black,thick}}
		\def\ptsize{2.0pt}
		\def\a{3} \def\b{0.5} \def\c{1}
		%		\draw[name path=ellipse2,draw=black,fill=blue!10!white!30]
		%		(0,\a) circle[x radius = \a cm, y radius = 0.5*\a cm];
		%		\draw[name path=ellipse1,draw=black,fill=blue!10!white!30]
		%		(-\a,\b) circle[x radius = \c cm, y radius = \c cm];
		%		\draw[name path=ellipse3,draw=black,fill=blue!10!white!30]
		%		(\a,\b) circle[x radius = \c cm, y radius = \c cm];
		\coordinate[label=below:$f$] (f) at (-\a,0);
		\coordinate[label=above:$c$] (c) at (0,1.25*\a);
		\coordinate[label=above:$e$] (e) at (-\a,2*\b);
		\coordinate[label=right:$i$] (i) at (\a,2*\b);
		\coordinate[label=below:$h$] (h) at (\a,0);
		\coordinate[label=above:$b$] (b) at (0.5*\a,\a);
		\coordinate[label=above:$g$] (g) at (\a-1.5*\b,\b);
		\coordinate[label=above:$d$] (d) at (0,0.55*\a);
		\coordinate[label=above:$a$] (a) at (-0.5*\a,\a);
		%		\coordinate[label=above:$S_2$] (s2) at (-\a-0.5,\b-0.5);
		%		\coordinate[label=above:$S_3$] (s3) at (\a+0.5,\b-0.5);
		%		\coordinate[label=above:$S_1$] (s1) at (-0.5*\a,1.15*\a);
		\draw (a)--(b);
		\draw (a)--(c);
		\draw (a)--(d);
		\draw (a)--(i);
		\draw (b)--(c);
		\draw (b)--(e);
		\draw (b)--(f);
		\draw (b)--(i);
		\draw (c)--(e);
		\draw (c)--(i);
		\draw (c)--(h);
		\draw (d)--(e);
		\draw (e)--(f);
		\draw (e)--(h);
		\draw (f)--(g);
		\draw (f)--(h);
		\draw (f)--(i);
		\draw (h)--(i);
		\foreach \p in {a,b,c,d,e,f,g,h,i}
		\fill[mypoints] (\p) circle (\ptsize);
	\end{tikzpicture}
	\caption{An undirected graph with 9 vertices and 18 edges. For two reference solutions $I^1=\{\{a,b,c,d\},\{e,f\},$ $\{g,h,i\}\}$ and $I^2=\{\{a,c\},\{b,f,g,h\},\{d,e,i\}\}$, we rearrange $I^2$ by $S_1^2=\{a,c\}$, $S_2^2=\{d,e,i\}$ and $S_3^2=\{b,f,g,h\}$, indicating the union of shared vertices $V_0=\{a,c,e,g,h\}$. }
	\label{fig::example}
\end{figure}

The following five crossover operators will be used in PEA to produce the offspring solutions $\bar{I}^1$, $\bar{I}^2$
from the parent solutions ${I}^1$, ${I}^2$. 
\begin{itemize}
	\item $C_1$: Implement a scoring strategy.
	\item $C_{2,1}$ and $C_{2,2}$: Utilize subgraph recovery methods and random generation techniques.
	\item $C_{3,1}$ and $C_{3,2}$: Employ the path-relinking algorithms to generate offspring solutions
\end{itemize}

\subsubsection{Crossover operator $C_1$}
\label{sec::c1}

The operator $C_1$, originally designed for \textsc{Max}-$2$-\textsc{Cut} \cite{Marti2009}, is now extended to address the general $k$-CUT problem ``$\opt\,F$", via scoring the reference solutions, $I^1$, $I^2$, and the segment labels of that with a higher score are more more likely being inherited. Assuming without loss of generality that $F(I^1)\geq F(I^2)$, 
we record the score for $I^1$ as
$$
\score^1=\left\{
\begin{aligned}
&\frac{F(I^1)}{F(I^1)+F(I^2)},&\text{if } \opt=\max,\\
&\frac{F(I^2)}{F(I^1)+F(I^2)},&\text{if } \opt=\min,
\end{aligned}
\right.
$$
and then the score for $I^2$ is $1-\score^1$. 
In consequence, each vertex $v$ in $\bar{I}^1$ is assigned the segment label $\labeling(v,\bar{I}^1)$ according to
\begin{equation}
\label{eq::label_i1}
\left\{
\begin{aligned}
	&\labeling(v,I^1),&\text{if } r\leq \score^1,\\
	&\labeling(v,I^2),&\text{if } r> \score^1,
\end{aligned}
\right.
\end{equation}
where $r$ is randomly selected from a uniform distribution $\mathcal{U}[0,1]$. On the other hand, each segment label of $\bar{I}^2$ is obtained from fixing $r=0.5$ in Eq.~\eqref{eq::label_i1}, thus $\bar{I}^2$ happens to be the better between $I^1$ and $I^2$.

\subsubsection{Crossover operators $C_{2,1}$ and $C_{2,2}$}
\label{sec::c2c3}

We extend the combination operator $CB_2$ proposed in \cite{Marti2009} to form two novel crossover operators, $C_{2,1}$ and $C_{2,2}$, tailored for general problem ``$\opt\,F$". Both start from the shared component $G[V_0]$ and proceed by greedily assigning the remaining vertices to the cut segments iteratively until recovering to the original graph $G$. For any subset $S\subseteq V$ and any partition $I=\{S_1,\ldots,S_k\}$ of $V$ on $G$, let $$I[S]=\{S\cap S_1,S\cap S_2,\ldots S\cap S_k\}$$ represent the induced partition of $S$ on the induced subgraph $G[S]$, thus we can still calculate the objective function value $F(I[S])$ on the subgraph $G[S]$. 

The generations of $\bar{I}^1$ by $C_{2,1}$ and $C_{2,2}$ are as follows.  Let $V_t$ denote the subset of vertices already assigned at the $t$-th step. The selected vertex $v^*$ and its label $\labeling(v^*,\bar{I}^1)$ are determined on $G[V_t]$ by
\begin{align}
\left(v^*,\labeling(v^*,\bar{I}^1)\right) &\in \argopt_{v\in V\backslash V_t,\,\labeling(v,\bar{I}^1)\in \mathcal{L}_2(v)}\left\{F(\bar{I}^1[V_t\cup\{v\}])\right\}, \\
\mathcal{L}_2(v) &=\left\{
\begin{aligned}
	&	\{\labeling(v,I^1),\labeling(v,I^2)\},& \text{when calling }C_{2,1},\\
	&[k], &\text{when calling }C_{2,2}.
\end{aligned}
\right. \label{eq:label_cb2}
\end{align}
Then we have $V_{t+1}\leftarrow V_t\cup\{v^*\}$ and an updated induced subgraph $G[V_{t+1}]$. Repeating this step for $|V_0^c|$ times restores the original graph $G$ and completes $\bar{I}^1$. 

The segment label $\labeling(v,\bar{I}^2)$ of each vertex $v\in V_0^c$ is determined by a random selection from $\mathcal{L}_2(v)$ (see Eq.~\eqref{eq:label_cb2}). 

Using 	\textsc{Normalized}-$3$-\textsc{Cut} on the graph in Figure \ref{fig::example} as an example, Figures~\ref{fig::c2} and \ref{fig::c3} illustrate the steps of applying $C_{2,1}$ and $C_{2,2}$ to generate $\bar{I}^1$, respectively. It is easy to verify that  $F(\bar{I}^1)<\min\{F(I^1),F(I^2)\}$ holds for both $C_{2,1}$ and $C_{2,2}$ in this example.

\begin{figure}[htbp]
	\centering
	% [inline block 0: 1 envs, 2031 chars -> data_tex | \begin{tikzpicture}[scale=1.2] 		\tikzset{mypoints/.style={fill=white,draw=black,thick}}...]

	%		\captionsetup{singlelinecheck=off}
	\caption{	\textsc{Normalized}-$3$-\textsc{Cut}: Application of the crossover operator $C_{2,1}$ on the graph in Figure \ref{fig::example}. It consists of four steps: (1) $V_1\leftarrow V_0\cup\{f\}$ by connecting $f$ to
		$e,g,h$ in densely dotted lines and $\labeling(f,\bar{I}^1)=3$;
		(2) $V_2\leftarrow V_1\cup\{d\}$ by connecting $d$ to $a,e$ in loosely dotted lines and $\labeling(d,\bar{I}^1)=2$;
		(3) $V_3\leftarrow V_2\cup\{b\}$ by connecting $b$ to $a,c,e,f$ in loosely dashed lines and $\labeling(b,\bar{I}^1)=3$;
		(4) $V=V_4\leftarrow V_3\cup\{i\}$ by connecting $i$ to $a,b,c,f,h$ in densely dashed lines and $\labeling(i,\bar{I}^1)=3$. Finally, we have $\bar{I}^1=\{\bar{S}_1^1, \bar{S}_2^1, \bar{S}_3^1\}$.}
	\label{fig::c2}
\end{figure}

\begin{figure}[htbp]
	\centering
	% [inline block 1: 1 envs, 2029 chars -> data_tex | \begin{tikzpicture}[scale=1.2] 		\tikzset{mypoints/.style={fill=white,draw=black,thick}}...]

	\caption{
		\textsc{Normalized}-$3$-\textsc{Cut}: Application of the crossover operator $C_{2,2}$ on the graph in Figure~\ref{fig::example}. It consists of four steps:
		(1) $V_1\leftarrow V_0\cup\{f\}$ by connecting $f$ to
		$e,g,h$ in densely dotted lines and $\labeling(f,\bar{I}^1)=3$;
		(2) $V_2\leftarrow V_1\cup\{i\}$ by connecting $i$ to $a,c,f,h$ in loosely dotted lines and $\labeling(i,\bar{I}^1)=1$;
		(3) $V_3\leftarrow V_2\cup\{d\}$ by connecting $d$ to $a,e$ in loosely dashed lines and $\labeling(d,\bar{I}^1)=1$;
		(4) $V=V_4\leftarrow V_3\cup\{b\}$ by connecting $b$ to $a,c,e,i$ in densely dashed lines and $\labeling(b,\bar{I}^1)=2$. Finally, we have $\bar{I}^1=\{\bar{S}_1^1, \bar{S}_2^1, \bar{S}_3^1\}$.
	}
	\label{fig::c3}
\end{figure} 

\subsubsection{Crossover operators $C_{3,1}$ and $C_{3,2}$}
\label{sec::c4c5}

The operators $C_{3,1}$ and $C_{3,2}$, extending $CB_3$ from \cite{Marti2009}, create $\bar{I}^1$ and $\bar{I}^2$ by identifying the midpoints of the mutual paths linking $I^1$ and $I^2$. Prior to detailing the process, let 
\begin{equation}
	\label{delta::obj}
	\Delta_M=F(I\circ M)-F(I)
\end{equation}
represent the change in $F$ resulting from a move $M$ on the given solution $I$ (denoted by $I\circ M$). Let the $j$-th order neighborhood of $I$ be
\begin{equation}
	\mathbb{E}^{(j)}(I) = 
	\{J:\,E(J)\in\mathbb{E},\, |\{v\in V:\,\labeling(v,J)=\labeling(v, I)\}|=n-j\},
\end{equation}
i.e., the union of all solutions that differ from $I$ by exactly $j$ vertices not belonging to the same labeled cut segments.		
The simplest move involves transferring a vertex $v$ from the current subset $S_p$ to another subset $S_q$, denoted as $S_p\rightarrow v\rightarrow S_q$, or $v\rightarrow S_q$, or $(v,q)$. All simplest moves causing changes in $F$ form the so-called ``natural move-gain" matrix for $F$:
\begin{equation}
	\label{delta::dobj}
	\mathcal{D}=(\Delta_{v\rightarrow S_q})_{v\in V,\,q\in[k]}\in\mathbb{Z}^{n\times k},
\end{equation}
which records the total possible changes in $F$ within the first order neighborhood of $I$.
For the vertices in {${V}_0^c=V\backslash(\cup_{p=1}^k(S_p^1\cap S_p^2))$}, executing the simplest move $|{V}_0^c|$ times can realize the transition $I^1\rightarrow I^2$ or $I^2\rightarrow I^1$, and the offspring solutions $\bar{I}^1$ and $\bar{I}^2$ are thus created in the halfway. This indicates that generating $\bar{I}^1$ or $\bar{I}^2$ requires $|{V}_0^c|/2$ times of simplest moves, each of which $v^*\rightarrow S_{q^*}$ at the $t$-th step is determined by
\begin{equation}
	\label{path::cb3-1}
	(v^*,\,q^*)\in\argopt_{v\in {V}_t^c,\, q\in\mathcal{L}_3(v)}\left\{ \Delta_{v\rightarrow S_q} \right\},\quad {V}_{t+1}\leftarrow {V}_t\cup \{v^*\},
\end{equation}
where
$$\mathcal{L}_3(v)=
\left\{
\begin{aligned}
	& \{\labeling(v,I^2)\}, & \text{when generating } \bar{I}^1 \text{ by }  C_{3,1},\\
	& \{\labeling(v,I^1)\}, & \text{when generating } \bar{I}^2 \text{ by }  C_{3,1},\\
	& [k]\backslash\{\labeling(v,I^1)\},& \text{when generating } \bar{I}^1 \text{ by }  C_{3,2},\\
	&[k]\backslash\{\labeling(v,I^2)\},& \text{when generating } \bar{I}^2 \text{ by }  C_{3,2}.
\end{aligned}	
\right.
$$
Both $C_{3,1}$ and $C_{3,2}$ aim to improve the quality of the offspring solutions. 

For 	\textsc{Normalized}-$3$-\textsc{Cut} on the graph in Figure~\ref{fig::example}, we have $|{V}_0^c|/2=2$, and the specific moves to create $\bar{I}^1$ an $\bar{I}^2$ by $C_{3,1}$ and $C_{3,2}$ are
$$
\begin{aligned}
	&C_{3,1}:&\bar{I}^1&=I^1\circ (f\rightarrow S_3^1) \circ (d\rightarrow S_2^1),\\
	&C_{3,1}:&\bar{I}^2&=I^2\circ (i\rightarrow S_3^2) \circ (b\rightarrow S_1^2),\\
	&C_{3,2}:&\bar{I}^1&=I^1\circ (f\rightarrow S_3^1) \circ (d\rightarrow S_2^1),\\
	&C_{3,2}:&\bar{I}^2&=I^2\circ (i\rightarrow S_1^2) \circ (b\rightarrow S_1^2).
\end{aligned}
$$
It can be readily verified that each offspring solution $\bar{I}\in\{\bar{I}^1,\bar{I}^2\}$ satisfies $F(\bar{I})<\min\{F(I^1),F(I^2)\}$ in this example.

\subsection{Mutation}
\label{sec::mutation}

For any $\opt\,F$ in MinGCP and MaxGCP, we design a multiple mutation heuristic (MMH)  (see Algorithm~\ref{algorithm::combinatorial}) to enhance the solution quality after the crossover phase,
inspired by the multiple operator heuristic (MOH) for \textsc{Max}-$k$-\textsc{Cut} \eqref{eq::max-k-cut} \cite{Ma2015}. 
MMH incorporates four local search (see $\widetilde{O}_1$, $\widehat{O}_1$, $\widetilde{O}_2$, $\widehat{O}_2$),   
four tabu search (see $\widetilde{O}_3$, $\widetilde{O}_4$, $\widehat{O}_3$, $\widehat{O}_4$) and one strong random perturbation (see $O_5$) operators, whereas MOH employs five search operators. 
A basic search operator in MOH is the single-transfer $O_1$, 
a best-of-all Variable Neighborhood Search (VNS) operator to seek the simplest move that yields the largest change (see Eq.~\eqref{delta::obj}) in the cut value function (see Eq.~\eqref{eq::cut-value}) \cite{mladenovic1997variable,hansen2002developments}.
To be more specific, 
after inputting a solution $I=\{S_1,\ldots,S_k\}$, 
$O_1$ examines the natural move-gain matrix $\mathcal{D}$ \eqref{delta::dobj} of ``$F=\frac{1}{2}\sum_{p=1}^k\cut(\partial S_p)$" to make the best simplest move $v\rightarrow S_q$ to form $I^\prime= I\circ (v\rightarrow S_q)$ that maximizes $F(I^\prime)$ within the first order neighborhood $I^\prime\in\mathbb{E}^{(1)}(I)$, and subsequently updates $\mathcal{D}$ for the next move. The majority of elements in $\mathcal{D}$ remain unchanged and only those related to the vertices in $\{v\}\cup\mathcal{N}(v)$ need to be recalculated after $O_1$ with $\mathcal{N}(v)=\{u\in V:\,\{u,v\}\in E\}$ being the neighborhood of $v$. Consequently, the number of elements requiring updates is $\mathcal{O}(kd_{\max})$, where $d_{\max}$ denotes the highest degree of the graph, enabling $O_1$ to be a rapid search operator. Nevertheless, the best-of-all search rule may not be effective for other $k$-CUT problems, especially for those with objective functions involving $\vol(\vec S)$ or $|\vec S|$.  Taking \textsc{Normalized}-$k$-\textsc{Cut} \eqref{eq::normalized-k-cut} as an example, the number of elements in $\mathcal{D}$ that necessitates updating after $O_1$ is $\mathcal{O}(k|V|)$, making the computation time-consuming \cite{hansen2012vns}. To strike a balance between the solution quality and computational cost, 
several reduced VNS methods~\cite{hansen2001variable,hansen2002developments,hansen2012vns,Ma2015} have been implemented, such as substituting the best-of-all rule with the first-of-all rule or utilizing random search strategies.
In a similar way, MMH replaces the best-of-all rule with a ``best-of-part" strategy to form $\widetilde{O}_1$ and $\widehat{O}_1$ as detailed in Section~\ref{sec::o1o4}. Each time applying $\widetilde{O}_1$ and $\widehat{O}_1$ involves updating the move-gain matrices $\widetilde{\mathcal{D}}$ and $\widehat{\mathcal{D}}$, respectively, and the number of elements requiring updates are both $\mathcal{O}(kd_{\max})$. Here are the definitions of $\widetilde{\mathcal{D}}$ and $\widehat{\mathcal{D}}$.

\textbf{The cut stable move-gain matrix $\tilde{\mathcal{D}}$} ~
Given any solution $I=\{S_1,S_2,\ldots,S_k\}$, let the change in the cut value function \eqref{eq::cut-value} be
\begin{equation}
	\widetilde{\Delta}_M=\cut(I\circ M)-\cut(I),
\end{equation} 
when applying a move $M$, and then define 
\begin{equation}
	\label{delta::dcut}
	\widetilde{\mathcal{D}}=(\widetilde{\Delta}_{v\rightarrow S_q})_{v\in V,\,q\in [k]}\in\mathbb{Z}^{n\times k}.
\end{equation} 
A direct calculations shows $\widetilde{\mathcal{D}} = (\widetilde{\mathcal{D}}_{vq})_{n\times k}$
with 
\begin{equation}
	\label{delta::value_cut}
	\widetilde{\mathcal{D}}_{vq}=\sum_{\{u,v\}\in E,\, u\in S_p}w_{uv}-\sum_{\{u,v\}\in E,\, u\in S_q}w_{uv}.
\end{equation} 
We call matrix $\tilde{\mathcal{D}}$ \emph{cut stable} after noting that $\widetilde{\mathcal{D}}_{vq}$, the change in cut value, depends only on the vertices in $\mathcal{N}(v)$ when moving the vertex $v$ from $S_p$ to $S_q$. 

%	,  alias $\widetilde{\Delta}_{v\rightarrow S_q}$,
%	. The boundary stable move-gain matrix is defined as

\textbf{The boundary stable move-gain matrix $\widehat{\mathcal{D}}$}~
Given any solution $I=\{S_1,S_2,\ldots,S_k\}$, let the change in the boundary size function $\cut(\partial S_q)$ be
\begin{equation}
	\widehat{\Delta}_M^{\{q\}}=\cut(\partial S_q^\prime)-\cut(\partial S_q),\quad I\circ M=\{S_1^\prime,\ldots,S_k^\prime\},\quad\forall q\in [k],
\end{equation}
when applying a move $M$, and then define 
\begin{equation}
	\label{delta::dpartial}
	\widehat{\mathcal{D}}=(\widehat{\Delta}_{v}^{\{q\}})_{v\in V,\,q\in [k]}\in\mathbb{Z}^{n\times k},\quad 	\widehat{\Delta}_v^{\{q\}}=\left\{
	\begin{aligned}
		&\widehat{\Delta}_{v\rightarrow S_q}^{\{q\}}, &v\notin S_q,\\
		&\widehat{\Delta}_{S_q\rightarrow v}^{\{q\}}, &v\in S_q.
	\end{aligned}
	\right.
\end{equation} 
A direct calculations shows $\widehat{\mathcal{D}} = (\widehat{\mathcal{D}}_{vq})_{n\times k}$
with 
\begin{equation}
	\label{delta::value_partial}
	\widehat{\mathcal{D}}_{vq}=\left\{
	\begin{aligned}
		&d_v-2\sum_{\{u,v\}\in E,\, u\in S_q}w_{uv}, &v\notin S_q,\\
		&2\sum_{\{u,v\}\in E,\, u\in S_q}w_{uv}-d_v, &v\in S_q.
	\end{aligned}
	\right.
\end{equation} 
We call matrix $\widehat{\mathcal{D}}$ \emph{boundary stable} after noting that $\widehat{\mathcal{D}}_{vq}$, the change in boundary size,  depends only on the vertices in $\mathcal{N}(v)$ when moving the vertex $v$ from $S_p$ to $S_q$. It should be pointed out that $\widehat{\mathcal{D}}_{vq}$ (resp.~$\widetilde{\mathcal{D}}_{vq}$) is another notation of $\widehat{\Delta}_{v}^{\{q\}}$ (resp.~$\widetilde{\Delta}_{v\rightarrow S_q}$) and we prefer to use the former for brevity and to the latter when emphasizing the move.

%\blue{It is readily derived from Eqs.~\eqref{delta::value_cut} and \eqref{delta::value_partial} that when a vertex is transferred from its  current subset to another cut segment, most elements remain unchanged except for those pertaining to the neighbors of $v$.}

%In addition, $\gamma_0$ and $1-\gamma_0$ are the possibilities of applying the packaged groups ($\widetilde{O}_1$, $\widetilde{O}_2$) and ($\widehat{O}_1$, $\widehat{O}_2$), respectively.

%with the possibilities of $\gamma_1$, $\gamma_2-\gamma_1$, $\gamma_3-\gamma_2$ and $1-\gamma_3$, respectively, 

The double-transfer local search operators $\widetilde{O}_2$ and $\widehat{O}_2$ moves an edge, i.e., two simplest moves, to improve $F$ and are triggered when $\widetilde{O}_1$ and $\widehat{O}_1$ exhaust, respectively, (see Algorithm~\ref{algorithm::combinatorial}, lines 4-18 and Section~\ref{sec::o1o4}). The computational cost of these four local search operators are lower than those of best-of-all rules (see Proposition~\ref{thm::o1comp}) with the benefit of bucket sorting.  When they fail to enhance the objective function value, 
MMH switches to the tabu search operators $\widetilde{O}_3$, $\widetilde{O}_4$, $\widehat{O}_3$, $\widehat{O}_4$ (see Algorithm \ref{algorithm::combinatorial}, lines 26-41 and Section \ref{sec::tabu}). Upon reaching the maximum perturbation  or surpassing the objective function value, MMH reverts to the local search phase. If the consecutive $\xi$ rounds of local-tabu search fail to improve the solution quality, a strong random perturbation operator $O_5$ is applied (see Algorithm \ref{algorithm::combinatorial}, line 43 and Section \ref{sec::perturb}). The termination condition for mutation can be either the time limit or the maximum number of iterations.

Following MMH, we propose a category search operator to further enhance the solution quality 
via applying MMH into an auxiliary graph $k$-CUT problem of the same category and the resulting algorithm is called an auxiliary cut mutation heuristic (ACMH). It is motivated by some early attempts using \textsc{Max}-$2$-\textsc{Cut} to generate high-quality solutions for \textsc{AntiCheeger}-$2$-\textsc{Cut} and  \textsc{Max}-\textsc{Bisection} \cite{shao2021continuous, burer2002rank}. 
In some sense, ACMH aims to explore high-quality solutions in the neighborhood of the auxiliary cut approximations.

\begin{breakablealgorithm}
	\caption{A multiple mutation heuristic (MMH) on graph $G = (V, E)$.}
	\label{algorithm::combinatorial}
	\begin{algorithmic}[1] \small  
		\Require 
		\begin{align*}
			\opt\, F(\cdot): & \text{~target problem}; \\
			I=\{S_1,\ldots,S_k\}: & \text{~initial solution}; \\
			\omega\in\mathbb{N}: & \text{~maximum number of tabu search moves}; \\
			\xi\in\mathbb{N}: & \text{~maximum number of consecutive non-improvement} \\
			& \text{~local-tabu search rounds}; \\
			\eta\in\mathbb{N}: & \text{~perturbation strength}; \\
			\gamma_0, \gamma_1, \gamma_2, \gamma_3\in(0,1): & \text{~probabilities of applying $(\widetilde{O}_1,\widetilde{O}_2)$, $(\widehat{O}_1,\widehat{O}_2)$, $\widetilde{O}_3$, $\widetilde{O}_4$, $\widehat{O}_3$, $\widehat{O}_4$} \\
			& \text{~are $\gamma_0$, $1-\gamma_0$, $\gamma_1$, $\gamma_2-\gamma_1$, $\gamma_3-\gamma_2$, $1-\gamma_3$, respectively.}
		\end{align*}
		\State $c_{\text{non-improved}} \gets 0$.
		$F_{\text{best}} \gets F(I)$.
		$I_{\text{best}} \gets I$.
		\While {not satisfying stopping criteria}
		\State Random selection:  $r_1\sim\mathcal{U}[0,1]$.\Comment{ $\mathcal{U}[0,1]$ represents the uniform distribution on $[0,1]$}
		\If {$r_1\leq \gamma_0$}
		\While {$I$ can be improved by $\widetilde{O}_1$, $\widetilde{O}_2$}
		\While {$I$ can be improved by $\widetilde{O}_1$ }
		\State $I\leftarrow I\circ \widetilde{O}_1$. Update $\widetilde{\mathcal{D}}$, $\widehat{\mathcal{D}}$. \Comment{Local search operator $\widetilde{O}_1$}
		\EndWhile
		\State $I\leftarrow I\circ \widetilde{O}_2$. Update $\widetilde{\mathcal{D}}$, $\widehat{\mathcal{D}}$.  \Comment{Local search operator $\widetilde{O}_2$}
		\EndWhile
		\Else
		\While {$I$ can be improved by $\widehat{O}_1$, $\widehat{O}_2$}
		\While {$I$ can be improved by $\widehat{O}_1$}
		\State $I\leftarrow I\circ \widehat{O}_1$. Update $\widetilde{\mathcal{D}}$, $\widehat{\mathcal{D}}$.  \Comment{Local search operator $\widehat{O}_1$}
		\EndWhile
		\State $I\leftarrow I\circ \widehat{O}_2$. Update $\widetilde{\mathcal{D}}$, $\widehat{\mathcal{D}}$.  \Comment{Local search operator $\widehat{O}_2$}
		\EndWhile
		\EndIf
		\State $F_{\text{local}} \gets F(I)$.
		\If { $F_{\text{local}}$ is better than $ F_{\text{best}}$}
		\State $F_{\text{best}} \gets F_{\text{local}}$.
		$I_{\text{best}}  \gets I$.
		$c_{\text{non-improved}} \gets 0$.
		\Else
		\State $c_{\text{non-improved}} \gets c_{\text{non-improved}} + 1$.
		\EndIf
		\State $c_{\text{tabu}} \gets 0$ and reset tabu list $L$.
		\While {$c_{\text{tabu}} \leq \omega$ and $F(I)$ is not better than $F_{\text{local}}$ } \Comment{Tabu search phase}
		\State Random selection: $\rho \sim \mathcal{U}[0,1]$.
		\If {$0\leq\rho <\gamma_1$}
		\State $I\leftarrow I\circ \widetilde{O}_3$. Update $\widetilde{\mathcal{D}}$, $\widehat{\mathcal{D}}$. Update $L$. \Comment{Tabu search operator $\widetilde{O}_3$}
		\EndIf
		\If {$\gamma_1\leq\rho <\gamma_2$}
		\State $I\leftarrow I\circ \widetilde{O}_4$. Update $\widetilde{\mathcal{D}}$, $\widehat{\mathcal{D}}$. Update $L$.
		\Comment{Tabu search operator $\widetilde{O}_4$}
		\EndIf
		\If {$\gamma_2\leq\rho <\gamma_3$}
		\State $I\leftarrow I\circ \widehat{O}_3$. Update $\widetilde{\mathcal{D}}$, $\widehat{\mathcal{D}}$. Update $L$.
		\Comment{Tabu search operator $\widehat{O}_3$}
		\EndIf
		\If {$\gamma_3\leq\rho \leq 1$}
		\State $I\leftarrow I\circ \widehat{O}_4$. Update $\widetilde{\mathcal{D}}$, $\widehat{\mathcal{D}}$. Update $L$.
		\Comment{Tabu search operator $\widehat{O}_4$}
		\EndIf
		\State $c_{\text{tabu}} = c_{\text{tabu}} + 1$.
		\EndWhile
		\If {$c_{\text{non-improved}} > \xi$}\Comment{Strong random perturbation phase}
		\State $I\leftarrow I\circ O_5$. \Comment{Random perturbation operator $O_5$ for $\eta$ times }
		\State Update $\widetilde{\mathcal{D}}$, $\widehat{\mathcal{D}}$.
		\State $c_{\text{non-improved}} \gets 0$. 
		\EndIf
		\EndWhile
		\State \Return{$I_{\text{best}}$}.
	\end{algorithmic}  
\end{breakablealgorithm}

			\subsubsection{Local search operators $\widetilde{O}_1$, $\widehat{O}_1$, $\widetilde{O}_2$, $\widehat{O}_2$}
			\label{sec::o1o4}

			The best-of-part rule utilizes the move-gain matrix to seek improved solutions within limited yet promising search areas, as opposed to the best-of-all rule which exhaustively explores all possible solutions in the neighborhoods. The proposed four local search operators fully follow the best-of-part rule. For simplicity, we denote the move-gain matrix
			$\mathcal{D} = (\mathcal{D}_{vq})_{n\times k}$ with 			
			\begin{equation}
				\mathcal{D}_{vq}=\left\{
				\begin{aligned}
					& \widetilde{\mathcal{D}}_{vq},&\text{when applying }\widetilde{O}_1\text{ or }\widetilde{O}_2,\\
					& \widehat{\mathcal{D}}_{vq},&\text{when applying }\widehat{O}_1 \text{ or }\widehat{O}_2.
				\end{aligned}
				\right.
			\end{equation}

			%										And for each double-transfer local search operator $\widetilde{O}_2$ or $\widehat{O}_2$, the combined two simplest moves $v^*\rightarrow S_{q^*}$ and $u^*\rightarrow S_{p^*}$ are selected from Eq.~\eqref{choice::o2cut} with the two vertices being endpoints of some edge $\{u^*,v^*\}\in E$.

			\textbf{The single-transfer local search operator $\widetilde{O}_1$ or $\widehat{O}_1$}  moves the vertex ${v}^*$ to $S_{{q}^*}$ that satisfies
			%will select the vertex $\widetilde{v}^*$ and transfer it into the subset $S_{\widetilde{q}^*}$ using the following procedure:
			\begin{align}
				\label{choice::o1cut}
				({v}^*,\,{q}^*)
				&\in 
				\argopt_{(v,q)\in \mathbb{D}(I)}\left\{F(I\circ (v\rightarrow S_q))\right\},
				%					\left\{
				%					\begin{aligned}
					%						&\argopt_{(v,q)\in \mathbb{D}(I)}\left\{F(I\circ (v\rightarrow S_q))\right\}, &
					%						\text{if }\opt\, F\in\text{MaxGCP},\\
					%						&\argopt_{(v,q)\in\mathbb{D}(I)}\left\{F(I\circ (v\rightarrow S_q))\right\}, &
					%						\text{if }\opt\, F\in\text{MinGCP},
					%					\end{aligned}
				%					\right. 
				\\
				\label{eq::d_vq}
				\mathbb{D}(I)
				&=\left\{
				\begin{aligned}
					\bigcup\limits_{t\in [k]\backslash\{\labeling(v, I)\}}\argmax_{v\in V,q=t}\{\mathcal{D}_{vq}\}, & \text{~if~}\opt\, F\in\text{MaxGCP},\\
					\bigcup\limits_{t\in [k]\backslash\{\labeling(v, I)\}}\argmin_{v\in V,q=t}\{\mathcal{D}_{vq}\},  & \text{~if~}\opt\, F\in\text{MinGCP}.
				\end{aligned}
				\right.
			\end{align}
			It can be readily verified that the search area $\mathbb{D}(I)$ contains the simplest moves leading to the largest improvement in the cut value or boundary size, thereby indicating that $\mathbb{D}(I)$ keeps pace with the basic needs of $\opt\,F$ and is thus the promising search area of improved solutions for $F$. In a word, $\widetilde{O}_1$ or $\widehat{O}_1$ selects the best simplest move within $\mathbb{D}(I)$, a subset of all possible moves in the first order neighborhood of the current solution $I$.

			%\item \textbf{The $\widehat{O}_1$ search operator} \blue{moves the vertex $\widehat{v}^*$ to $ S_{\widehat{q}^*}$ that satisfies}
			%%will select the vertex $\widehat{v}^*$ and transfer it into the subset $ S_{\widehat{q}^*}$ using the following procedure:
			%\begin{equation}
			%\label{choice::o1partial}
			%(\widehat{v}^*,\,\widehat{q}^*)
			%\in 
			%\left\{
			%\begin{aligned}
			%&\argopt_{(v,q)\in\cup_{q\in [k]}\argmax_v\{\widehat{\mathcal{D}}_{vq}\}}\left\{f_C(I\circ v\rightarrow S_q)\right\}, &
			%\text{if }\opt\, f_C\in\text{MaxGCP},\\
			%&\argopt_{(v,q)\in\cup_{q\in [k]}\argmin_v\{\widehat{\mathcal{D}}_{vq}\}}\left\{f_C(I\circ v\rightarrow S_q)\right\}, &
			%\text{if }\opt\, f_C\in\text{MinGCP}.
			%\end{aligned}
			%\right.
			%\end{equation}

			\textbf{The double-transfer local search operator $\widetilde{O}_2$ or $\widehat{O}_2$} moves the vertices ${v}^*$ and ${u}^*$ into $ S_{{q}^*}$ and $ S_{{p}^*}$, respectively, as follows				
			\begin{equation}
				\label{choice::o2cut}
				({v}^*,\,{q}^*), ({u}^*,\,{p}^*) \in
				\argopt_{\begin{tiny}
						\begin{matrix}
							(v,q)\in\mathbb{D}(I),u\in\mathcal{N}(v),\\
							p\in[k]\backslash\{\labeling(u,I)\}
						\end{matrix}
				\end{tiny}}\left\{F(I\circ (v\rightarrow S_q)\circ (u\rightarrow S_p))\right\}.
				%					 
				%					\left\{
				%					\begin{aligned}
					%						&, &
					%						\text{if }\opt\, F\in\text{MaxGCP},\\
					%						&\argopt_{\begin{tiny}
							%								\begin{matrix}
								%									(v,q)\in\mathbb{D}(I),u\in\mathcal{N}(v),\\
								%									p\in[k]\backslash\{\labeling(u,I)\}
								%								\end{matrix}
							%						\end{tiny}}\left\{F(I\circ (v\rightarrow S_q)\circ (u\rightarrow S_p))\right\}, &
					%						\text{if }\opt\, F\in\text{MinGCP}.
					%					\end{aligned}
				%					\right.
			\end{equation}
			It can be easily observed above that both $\widetilde{O}_2$ and $\widehat{O}_2$ typically target two vertices that are endpoints of an edge. Considering a double-transfer move, $S_{p_u}\rightarrow u\rightarrow S_{q_u}$ and $S_{p_v}\rightarrow v \rightarrow S_{q_v}$, the changes in the cut value and boundary size are respectively represented by
			\begin{align}
				\widetilde{\Delta}_{(u,v)}&=\widetilde{\Delta}_{u\rightarrow S_{q_u}}+\widetilde{\Delta}_{v\rightarrow S_{q_v}}+\widetilde{\phi}w_{uv},\quad \text{when applying } \widetilde{O}_2,\label{delta::o2cut}\\
				\widehat{\Delta}_{(u,v)}^{\{i\}}&=\widehat{\Delta}_{{u\rightarrow S_{q_u}}}^{\{i\}}+\widehat{\Delta}_{{v\rightarrow S_{q_v}}}^{\{i\}}+\widehat{\phi}^{\{i\}}w_{uv},\,i\in\{p_u,\,q_u,\,p_v,\,q_v\}, \quad \text{when applying } \widehat{O}_2,	\label{delta::o2partial}
			\end{align}
			where the coefficients $\widetilde{\phi}$ and $\widehat{\phi}^{\{i\}}$ are classified into seven cases as detailed in Table~\ref{tab::d2phi}. It is evident that $\widetilde{O}_2$ (resp.~$\widehat{O}_2$) on vertices $u,v$ deviates from two independent single-transfer moves iff $\widetilde{\phi}w_{uv}\neq 0$ (resp.~$\exists\, i\in\{p_u,\,q_u,\,p_v,\,q_v\}$ such that $\widehat{\phi}^{\{i\}}w_{uv}\neq 0$).
			
			\begin{table}[htbp]
				\centering
				\caption[Coefficients of double-transfers]{Coefficients $\widetilde{\phi}$ and $\widehat{\phi}^{\{i\}}$ required respectively by Eqs.~\eqref{delta::o2cut} and \eqref{delta::o2partial}.} 
				\label{tab::d2phi}
				\begin{tabular}{|l|ccccc|}
					\hline
					&\multirow{2}*{\makecell{ $\widetilde{\phi}$}} & \multirow{2}*{\makecell{$\widehat{\phi}^{\{p_u\}}$}} &\multirow{2}*{\makecell{$\widehat{\phi}^{\{p_v\}}$}} &\multirow{2}*{\makecell{$\widehat{\phi}^{\{q_u\}}$}} &\multirow{2}*{\makecell{$\widehat{\phi}^{\{q_v\}}$}} \\
					&&&&&\\ \hline
					${p_u}={p_v}$,  ${q_u}={q_v}$ & -2                 & -2                         & -2                         & -2                         & -2                         \\ \hline
					${q_u}={p_v}$,  ${p_u}={q_v}$ & 2                  & 2                          & 2                          & 2                          & 2                          \\ \hline
					${p_u}={p_v}$,  ${q_u}\neq {q_v}$ & -1                 & -2                         & -2                         & 0                          & 0                          \\ \hline
					${p_u}={q_v}$,  ${q_u}\neq {p_v}$ & 1                  & 2                          & 0                          & 0                          & 2                          \\ \hline
					${p_u}\neq {p_v}$,  ${q_u}={q_v}$ & -1                 & 0                          & 0                          & -2                         & -2                         \\ \hline
					${p_u}\neq {q_v}$,  ${q_u}={p_v}$ & 1                  & 0                          & 2                          & 2                          & 0                          \\ \hline
					\multirow{2}*{\makecell{${p_u}\neq {p_v}$,  ${q_u}\neq {p_v}$\\${p_u}\neq {q_v}$,  ${q_u}\neq {q_v}$}} 
					&    \multirow{2}*{\makecell{0}}          &  \multirow{2}*{\makecell{0}}  &          \multirow{2}*{\makecell{0}}                &     \multirow{2}*{\makecell{0}}                        & \multirow{2}*{\makecell{0}}                \\ &&&&&\\
					\hline
				\end{tabular}
			\end{table}

			%when excluding the final case in Table~\ref{tab::d2phi} and $\{u,v\}\in E$. 
			
			Two comments follow accordingly. 
			\begin{itemize}
				\item  Utilizing the bucket sorting schemes \cite{Ma2015} to store the stable move-gain matrices, the search area $\mathbb{D}(I)$, whose size $|\mathbb{D}(I)|$ is notably smaller than $k|V|$ in general cases, is easily obtained as detailed in Appendix~\ref{app:proof}. The update of each bucket sorting scheme when acting a simplest move is efficient with the computational cost being $\mathcal{O}(kd_{\max})$ instead of $\mathcal{O}(k|V|)$. The cost analysis of $\widetilde{O}_1$, $\widehat{O}_1$, $\widetilde{O}_2$ and $\widehat{O}_2$ is delineated in Proposition~\ref{thm::o1comp}.
				\item For \textsc{Max}-$k$-\textsc{Cut} and \textsc{Min}-$k$-\textsc{Cut}, Proposition~\ref{thm::maxkcut}
				demonstrates that the best-of-part search operators $\widetilde{O}_1$ and $\widetilde{O}_2$ are nothing but the best-of-all ones in the first and second order neighborhoods, respectively. 
			\end{itemize}

			%Note that the original double-transfer search operator $O_2$ in~\cite{Ma2015} does not guarantee the identification of local optima within the second order neighborhood, as it executes random moves on a portion of randomly selected vertices. 

			%\textbf{The $\widehat{O}_2$ search operator} \blue{moves the vertices $\widehat{v}^*$ and $\widehat{u}^*$ respectively into $ S_{\widehat{q}^*}$ and $ S_{\widehat{p}^*}$ by}
			%\begin{equation}
			%	\label{choice::o2partial}
			%	\begin{matrix}
				%		(\widehat{v}^*,\,\widehat{q}^*)\\
				%		(\widehat{u}^*,\,\widehat{p}^*)
				%	\end{matrix}
			%	\in 
			%	\left\{
			%	\begin{aligned}
				%		&\argopt_{\begin{tiny}
						%				\begin{matrix}
							%					(v,q)\in\cup_{q\in [k]}\argmax_v\{\widehat{\mathcal{D}}_{vq}\}\\
							%					u\in\mathcal{N}(v),p\in[k]\backslash\{\labeling(u,I)\}
							%				\end{matrix}
						%		\end{tiny}}\left\{f_C(I\circ v\rightarrow S_q\circ u\rightarrow S_p)\right\}, &
				%		\text{if }\opt\, f_C\in\text{MaxGCP},\\
				%		&\argopt_{\begin{tiny}
						%				\begin{matrix}
							%					(v,q)\in\cup_{q\in [k]}\argmin_v\{\widehat{\mathcal{D}}_{vq}\}\\
							%					u\in\mathcal{N}(v),p\in[k]\backslash\{\labeling(u,I)\}
							%				\end{matrix}
						%		\end{tiny}}\left\{f_C(I\circ v\rightarrow S_q\circ u\rightarrow S_p)\right\}, &
				%		\text{if }\opt\, f_C\in\text{MinGCP}.
				%	\end{aligned}
			%	\right.
			%\end{equation}

			\begin{prop}\label{thm::o1comp}
				The computational costs of the local search operators are bounded as follows:
				\begin{enumerate}
					\item For $\widetilde{O}_1$ and $\widehat{O}_1$, the complexity is at most $\mathcal{O}(\max\{k |\mathbb{D}(I)|,\,k d_{\max}\})$.
					\item For $\widetilde{O}_2$ and $\widehat{O}_2$, the complexity is at most $\mathcal{O}(k d_{\max}|\mathbb{D}(I)|)$.
				\end{enumerate}
%				For the four local search operators $\widetilde{O}_1$, $\widehat{O}_1$, $\widetilde{O}_2$ and $\widehat{O}_2$, we have the following cost estimation: (1) the cost of $\widetilde{O}_1$ or $\widehat{O}_1$ is no more than $\mathcal{O}(k|\mathbb{D}(I)|+kd_{\max})$; (2) the cost of $\widetilde{O}_2$ or $\widehat{O}_2$ is no more than $\mathcal{O}(kd_{\max}+k|\mathbb{D}(I)|d_{\max})$. 			%Assuming that the cost of computing the objective function value $f_C(\cdot)$ is $F$, the computational complexity of performing a single-transfer move operation $\widetilde{O}_1$ or $\widehat{O}_1$ consists of the following two parts. 
				%\begin{enumerate}
				%\item  Let $l$ be the average length of the doubly-linked lists corresponding to the maximum or the minimum nodes for MaxGCP or MinGCP, respectively. The computational complexity of determining a single-transfer move is $O(Fk l)$.
				%\item The computational complexity of updating the two stable move-gain matrices, the bucket sorting arrays, and relabeling the minimum or the maximum nodes is $O(kd_{\max})$.
				%\end{enumerate}
			\end{prop}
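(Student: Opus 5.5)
The plan is to account separately for the two phases of each operator: selecting the move, and updating the data structures after the move is applied. Throughout I take as given the bucket sorting scheme for the stable move-gain matrices $\widetilde{\mathcal{D}}$ and $\widehat{\mathcal{D}}$ (Appendix~\ref{app:proof}). For each column $q\in[k]$ it keeps the entries $\mathcal{D}_{vq}$ in buckets indexed by their integer values, which lie in $[-d_{\max},d_{\max}]$. It also keeps a pointer to the extreme nonempty bucket: the maximum for MaxGCP and the minimum for MinGCP. Reading off the set $\mathbb{D}(I)$ in \eqref{eq::d_vq} therefore amounts to taking the extreme bucket of each of the $k$ columns, at a cost of $\mathcal{O}(k+|\mathbb{D}(I)|)$.

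\textbf{Step 1: $\widetilde{O}_1$ and $\widehat{O}_1$.} I would first evaluate $F(I\circ(v\rightarrow S_q))$ for each candidate $(v,q)\in\mathbb{D}(I)$. I maintain the current vectors $\cut(\partial\vec S)$, $\vol(\vec S)$ and $|\vec S|$ as part of the solution state. A single move $v\rightarrow S_q$ with $v\in S_p$ changes only the entries with indices $p$ and $q$:
\begin{itemize}
\item the boundary entries change via $\widehat{\mathcal{D}}_{vp}$ and $\widehat{\mathcal{D}}_{vq}$, both read in $\mathcal{O}(1)$;
\item the volume entries change by $\pm d_v$;
\item the cardinality entries change by $\pm1$.
\end{itemize}
Each of the nine objectives in \eqref{eq::max-k-cut}--\eqref{eq::sparsest} is a sum, max or min over $p\in[k]$ of terms built from these quantities. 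I will therefore charge $\mathcal{O}(k)$ per trial evaluation, recomputing the aggregate over the $k$ segments. The selection phase then costs $\mathcal{O}(k|\mathbb{D}(I)|)$.

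After the chosen move, only the rows of $\widetilde{\mathcal{D}}$ and $\widehat{\mathcal{D}}$ indexed by $\{v^*\}\cup\mathcal{N}(v^*)$ change, as \eqref{delta::value_cut} and \eqref{delta::value_partial} show. For a neighbour $u$, $\widetilde{\mathcal{D}}_{uq}$ changes only through the edge weight $w_{uv^*}$ entering or leaving the sums over $S_p$ and $S_{q^*}$. Hence each neighbour row is refreshed in $\mathcal{O}(k)$, and the row of $v^*$ itself in $\mathcal{O}(k+d_{v^*})$. Each changed entry is relocated between buckets in $\mathcal{O}(1)$ using doubly linked lists. The total is $\mathcal{O}(kd_{\max})$. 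Adding the two phases gives $\mathcal{O}(k|\mathbb{D}(I)|+kd_{\max})=\mathcal{O}(\max\{k|\mathbb{D}(I)|,kd_{\max}\})$.

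\textbf{Step 2: $\widetilde{O}_2$ and $\widehat{O}_2$.} By \eqref{choice::o2cut}, the candidate set consists of triples $(v,q)\in\mathbb{D}(I)$, $u\in\mathcal{N}(v)$ and $p\ne\labeling(u,I)$, so there are at most $|\mathbb{D}(I)|\,d_{\max}(k-1)$ of them. For each triple, the changes in cut value and in the four affected boundaries are given in closed form by \eqref{delta::o2cut}--\eqref{delta::o2partial} together with Table~\ref{tab::d2phi}. Each is an $\mathcal{O}(1)$ combination of stored matrix entries and $w_{uv}$.

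Here I must obtain an $\mathcal{O}(1)$ amortised evaluation of $F$ per triple, not $\mathcal{O}(k)$. I would fix $(v,q)$ and $u$ and precompute, in $\mathcal{O}(k)$, the aggregate of the objective terms over the segments other than $\labeling(v,I)$, $q$ and $\labeling(u,I)$. Then, for each choice of $p$, only the term for index $p$ is modified, which is $\mathcal{O}(1)$. For max/min objectives I would precompute the top-few order statistics, so that removing a few terms still takes $\mathcal{O}(1)$. This gives $\mathcal{O}(k)$ per pair $(v,u)$ and hence $\mathcal{O}(kd_{\max}|\mathbb{D}(I)|)$ for the selection phase.

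The subsequent two simplest moves each cost $\mathcal{O}(kd_{\max})$ to update, as in Step 1. That cost is dominated because $|\mathbb{D}(I)|\ge1$, which yields the bound in the second item.

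The main obstacle is this $\mathcal{O}(1)$ amortised objective evaluation for the max/min-type objectives in Step 2, such as \eqref{eq::jp-k-cut}, \eqref{eq::ah-k-cut} and \eqref{eq::cheeger}. If the order-statistic trick turns out awkward, the fallback is to absorb one factor of $k$ by noting that the candidate $p$ effectively ranges over $\mathcal{O}(1)$ relevant classes of segments.
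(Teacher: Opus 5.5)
Your proposal is correct. For $\widetilde{O}_1$ and $\widehat{O}_1$ it is the argument of Appendix~\ref{app:proof}. You maintain $\cut(\partial S_p)$, $\vol(S_p)$ and $|S_p|$, so each trial move is evaluated in $\mathcal{O}(k)$ from entries of $\widehat{\mathcal{D}}$. You then bound the update by the $\mathcal{O}(kd_{\max})$ entries in the rows of $v^*$ and $\mathcal{N}(v^*)$, each relocated in its bucket in $\mathcal{O}(1)$.

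The genuine difference is in the double-transfer case. The appendix asserts that $\widetilde{O}_2,\widehat{O}_2$ make at most $|\mathbb{D}(I)|d_{\max}$ objective evaluations at cost $\mathcal{O}(k)$ each. This drops the $k-1$ choices of $p\in[k]\backslash\{\labeling(u,I)\}$ in \eqref{choice::o2cut}. Evaluating each of the up to $(k-1)|\mathbb{D}(I)|d_{\max}$ candidates independently in $\mathcal{O}(k)$ would only give $\mathcal{O}(k^2d_{\max}|\mathbb{D}(I)|)$.

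Your amortization is what actually delivers the stated bound, and it works, so the fallback is unnecessary:
\begin{itemize}
\item For $p\notin\{\labeling(v,I),q\}$, Table~\ref{tab::d2phi} shows that the correction coefficients of the three fixed segments $\labeling(v,I)$, $q$, $\labeling(u,I)$ do not depend on $p$. Only cases 3, 4 and 7 can occur, and there $\widehat{\phi}^{\{q_u\}}=0$ for $q_u=p$. So only the term of segment $p$ varies.
\item For the max/min objectives, storing the two extreme untouched terms suffices, since only one further index is ever excluded.
\item The at most two special values $p\in\{\labeling(v,I),q\}$ can be evaluated directly in $\mathcal{O}(k)$.
\end{itemize}

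Your route uses the separable structure of the objectives: each is a sum, max or min of per-segment terms. This holds for all nine, since $\vol(S_p^c)=\vol(V)-\vol(S_p)$ and $|S_p^c|=n-|S_p|$. The paper's black-box view (a cost $c$ per evaluation) would cover any objective with $\mathcal{O}(k)$ incremental evaluation, but as written it does not justify the second item; yours does.

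One detail you leave implicit, which the paper only mentions in passing: when an extreme bucket empties, the pointer $b_{\max}^q$ or $b_{\min}^q$ must be advanced by scanning up to $2d_{\max}+1$ buckets. This costs $\mathcal{O}(kd_{\max})$ per simplest move and so stays within your update bound.
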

			\begin{proof}
				Please see Appendix~\ref{app:proof}. 
			\end{proof}

			\begin{prop}
				\label{thm::maxkcut}
				Given any unweighted graph and ``$F=\frac{1}{2}\sum_{p=1}^k\cut(\partial S_p)$", the operators $\widetilde{O}_1$ and $\widetilde{O}_2$ are able to identify local optima in the first and second order neighborhoods, respectively.
			\end{prop}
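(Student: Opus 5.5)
Since $F=\cut(\vec S)$ (Max-$k$-Cut is MaxGCP, Min-$k$-Cut is MinGCP), the objective change of any move equals the cut change: for a simplest move, $F(I\circ(v\rightarrow S_q))-F(I)=\widetilde{\mathcal{D}}_{vq}$. The plan is to show that the restricted search area $\mathbb{D}(I)$ used by $\widetilde{O}_1$ and $\widetilde{O}_2$ always contains a best move of the full first- (respectively second-) order neighborhood. I describe the MaxGCP case; the MinGCP case is the same argument with $\max$ and $\min$ exchanged.

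\emph{First-order neighborhood.} Every $J\in\mathbb{E}^{(1)}(I)$ has the form $I\circ(v\rightarrow S_q)$ with $q\neq\labeling(v,I)$. So $\max_{J\in\mathbb{E}^{(1)}(I)}F(J)=F(I)+\max_{v,q}\widetilde{\mathcal{D}}_{vq}$. Any maximizing pair $(v,q)$ is, for the column $t=q$, among the column-wise maximizers collected in $\mathbb{D}(I)$ by Eq.~\eqref{eq::d_vq}. Hence $\widetilde{O}_1$, which takes the best pair in $\mathbb{D}(I)$, realizes the best move of the whole first-order neighborhood. It follows that $\widetilde{O}_1$ stops exactly when $I$ is a local optimum in $\mathbb{E}^{(1)}(I)$. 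I will also note that moves emptying a segment lie outside $\mathbb{E}$ and are excluded on both sides, so they do not affect the comparison.

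\emph{Second-order neighborhood.} A solution in $\mathbb{E}^{(2)}(I)$ moves two vertices $u,v$. By Eq.~\eqref{delta::o2cut}, its gain is $\widetilde{\mathcal{D}}_{uq_u}+\widetilde{\mathcal{D}}_{vq_v}+\widetilde{\phi}w_{uv}$. If $u,v$ are non-adjacent, or $\widetilde{\phi}=0$, this is the sum of two single-move gains. The key step is: $\widetilde{O}_2$ is invoked only after $\widetilde{O}_1$ is exhausted, so every $\widetilde{\mathcal{D}}_{vq}\le 0$. Then a non-interacting pair has gain at most $0$ and cannot improve $F$. An improving double move therefore needs $\{u,v\}\in E$ and $\widetilde{\phi}>0$. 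Since the graph is unweighted, $w_{uv}=1$, and Table~\ref{tab::d2phi} gives $\widetilde{\phi}\le 2$, so such a move improves $F$ iff $\widetilde{\mathcal{D}}_{uq_u}+\widetilde{\mathcal{D}}_{vq_v}+\widetilde{\phi}>0$.

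\emph{Main obstacle.} $\widetilde{O}_2$ only scans pairs whose first component $(v,q)$ lies in $\mathbb{D}(I)$, with $u\in\mathcal{N}(v)$ arbitrary. I must show the optimal adjacent pair can be chosen with one endpoint in $\mathbb{D}(I)$. The planned route uses integrality and unweightedness. All $\widetilde{\mathcal{D}}_{vq}$ are nonpositive integers, and an improvement requires $\widetilde{\mathcal{D}}_{uq_u}+\widetilde{\mathcal{D}}_{vq_v}\ge -1$. So one endpoint has gain $0$, and the other has gain $0$ or $-1$. When the zero-gain endpoint is a column maximum (column max equal to $0$), it belongs to $\mathbb{D}(I)$ for its target column. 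For the $-1$ partner, $u$ ranges over all neighbors and $p$ over all labels, so the inner search covers it. The delicate sub-case is when the column maximum is tied: $\mathbb{D}(I)$ must be read as the full $\argmax$ set, including all ties, not a single representative. I expect to have to state this convention explicitly, consistent with the bucket structure returning whole maximal buckets (Appendix~\ref{app:proof}). With it, $\widetilde{O}_2$ attains the maximum over $\mathbb{E}^{(2)}(I)$ whenever an improvement exists. Otherwise $I$ is second-order locally optimal.
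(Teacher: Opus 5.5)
Your proposal is correct and follows the paper's proof. First-order optimality holds because $\mathbb{D}(I)$ contains every column-wise maximal gain. For double moves, first-order optimality forces an adjacent pair with $\widetilde{\phi}>0$; integrality and $w_{uv}=1$ then give an endpoint with zero gain, which (all gains being $\le 0$) is a column maximum in $\mathbb{D}(I)$, and the partner is covered by the neighbor search of $\widetilde{O}_2$. Your bound $\widetilde{\mathcal{D}}_{uq_u}+\widetilde{\mathcal{D}}_{vq_v}\ge -1$ is slightly more careful than the paper, which claims $\widetilde{\phi}w_{uv}=2$ is forced even though for $k\ge 3$ the case $\widetilde{\phi}=1$ with both single-move gains zero can also be improving; both routes reach the same conclusion that some endpoint has zero gain.
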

			\begin{proof}
				\textsc{Min}-$k$-\textsc{Cut} shares the similar proof with \textsc{Max}-$k$-\textsc{Cut} that we assume ``$\opt=\max$" in the followings.
				If $\widetilde{O}_1$ can not improve the cut value for the current solution $I$, $\widetilde{D}_{vq}\leq 0$ holds for $\forall\,(v,q)\in \mathbb{D}(I)$ (see Eq.~\eqref{eq::d_vq}), thus $\widetilde{D}_{vq}\leq 0$ holds for $\forall\,v\in V$ and $\forall\,q\in[k]$, leading to $I$ being the local optimum in the first order neighborhood. Next we prove that if there exists a double-transfer move $S_{p_u}\rightarrow u\rightarrow S_{q_u}$ and $S_{p_v}\rightarrow v \rightarrow S_{q_v}$ that can improve the cut value, i.e. $\widetilde{\Delta}_{(u,v)}>0$, $\widetilde{O}_2$ will promote the cut value. Since the current solution $I$ is a local maximum in $\mathbb{E}^{(1)}(I)$, we have $\widetilde{\Delta}_{u\rightarrow S_{q_u}}\leq 0$ and $\widetilde{\Delta}_{v\rightarrow S_{q_v}}\leq 0$, indicating $\widetilde{\phi}w_{uv}=2$ to satisfy $\widetilde{\Delta}_{(u,v)}>0$ and at least one of $\widetilde{\Delta}_{u\rightarrow S_{q_u}}=0$ and $\widetilde{\Delta}_{v\rightarrow S_{q_v}}=0$ must be satisfied, implying that either $(v,q_v)$ or $(u,q_u)$ belongs to $\mathbb{D}(I)$. Therefore, this double-transfer move $S_{p_u}\rightarrow u\rightarrow S_{q_u}$ and $S_{p_v}\rightarrow v \rightarrow S_{q_v}$ is within the search area of $\widetilde{O}_2$.
			\end{proof}

			%			\begin{prop}[Double-transfer operators $\widetilde{O}_2$ and $\widehat{O}_2$]
				%				\label{thm::o2comp}
				%				\blue{The computational cost for applying a double-transfer move is no more than $\mathcal{O}(\max\{k,c|\mathbb{D}(I)|\}d_{\max})$.}
				%				%In practice, the double-transfer move operations can be considered as the combination of two single-transfer move operations, which allow them to satisfy the last property of Proposition \ref{thm::o1comp}. Regarding the first property, the cost of computing and selecting a double-transfer move is $O(Fkld_{\max})$.
				%			\end{prop} 

			\subsubsection{Tabu search operators $\widetilde{O}_3$, $\widetilde{O}_4$, $\widehat{O}_3$, $\widehat{O}_4$}
			\label{sec::tabu}
			
			When above four local search operators fail to enhance the quality of the current solution, we turn to the tabu search phase, inspired by the approach in~\cite{Ma2015}. This phase integrates the local search operators with a tabu list $L$, the $i$-th element in which represents the tabu tenure $L_i$ of the vertex $i$, implying that the vertex $i$ is prohibited from being moved in the next $L_i$ iterations.

			The single-transfer tabu search operators  $\widetilde{O}_3$ and $\widehat{O}_3$ are derived from $\widetilde{O}_1$ and $\widehat{O}_1$, respectively, while the double-transfer tabu search operators $\widetilde{O}_4$ and $\widehat{O}_4$ originate from $\widetilde{O}_2$ and $\widehat{O}_2$, respectively. Before detailing the implementation of the tabu search operators, we define the $j$-th search layer of the bucket sorting storage scheme for any solution $I$ as 
			\begin{equation}
				\label{choice::j-layer}
				\begin{aligned}
					&\mathbb{D}_j(I,L)=\\
					&\left\{
					\begin{aligned}
						&\bigcup_{q\in [k]}\{(v,q):\,L_v=0\text{ or }F(I\circ (v\rightarrow S_q))>F_{\text{local}},\, \mathcal{D}_{vq}=b_{\max}^q-j+1\}, &
						\text{if }\opt\, F\in\text{MaxGCP},\\
						&\bigcup_{q\in [k]}\{(v,q):\,L_v=0\text{ or }F(I\circ (v\rightarrow S_q))<F_{\text{local}},\,\mathcal{D}_{vq}=b_{\min}^q+j-1\}, &
						\text{if }\opt\, F\in\text{MinGCP},
					\end{aligned}
					\right.\\
				\end{aligned}
			\end{equation}
			where $\mathcal{D}\in\{\widetilde{\mathcal{D}},\,\widehat{\mathcal{D}}\}$ and its element is denoted by $\mathcal{D}_{vq}$, $b_{\max}^q=\max_{v\in V}\{\mathcal{D}_{vq}\}$ and $b_{\min}^q=\min_{v\in V}\{\mathcal{D}_{vq}\}$. Here $L_v=0$ indicates the vertex $v$ is not banned. By comparing $F(I\circ (v\rightarrow S_q))$ with $F_{\text{local}}$, we can promptly capture improved solutions irrespective of the tabu list, allowing MMH to exit the tabu search phase (see Algorithm~\ref{algorithm::combinatorial}). In the case of a all-zero tabu list $L$, $\mathbb{D}_1(I,L)=\mathbb{D}(I)$ (see Eq.~\eqref{eq::d_vq}) exactly represents the search area of the corresponding single-transfer local search operator.

			%When the four local search operators fail to enhance the quality of the current solution, it becomes imperative to overcome local optima and broaden the diversity of the search space. Inspired from the applications outlined in \cite{Ma2015}, we can combine the local search operators  $\widetilde{O}_1$, $\widehat{O}_1$, $\widetilde{O}_2$, $\widehat{O}_2$) with tabu search strategies to generate corresponding tabu search operators ($\widetilde{O}_3$, $\widehat{O}_3$, $\widetilde{O}_4$, $\widehat{O}_4$)(see Algorithm \ref{algorithm::combinatorial}, lines 28-39).
			
			%	\begin{equation}
				%		\label{choice::o3cut}
				%		({v}^*,\,{q}^*)
				%		\in 
				%		\left\{
				%		\begin{aligned}
					%			&\argopt_{(v,q)\in \mathbb{D}_j(I,L)}\left\{f_C(I\circ v\rightarrow S_q)\right\}, &
					%			\text{if }\opt\, f_C\in\text{MaxGCP},\\
					%			&\argopt_{(v,q)\in\mathbb{D}_j(I,L)}\left\{f_C(I\circ v\rightarrow S_q)\right\}, &
					%			\text{if }\opt\, f_C\in\text{MinGCP},
					%		\end{aligned}
				%		\right.
				%	\end{equation}

			\textbf{The single-transfer tabu search operator} $\widetilde{O}_3$ or $\widehat{O}_3$ determines the vertex $v^*$ and the moved in subset $S_{q^*}$ layer-by-layer in a first-of-all strategy. To begin with $j=1$, the move is randomly selected from $	({v}^*,\,{q}^*)\in  \mathbb{D}_j(I,L)$, and set $j\leftarrow j+1$ if $\mathbb{D}_j(I,L)=\emptyset$, otherwise stop the search.

			\textbf{The double-transfer tabu search operator} $\widetilde{O}_4$ or $\widehat{O}_4$  utilizes neighborhoods of $\mathbb{D}_j(I,L)$ to determine the moves ${v}^*\rightarrow S_{{q}^*}$ and ${u}^*\rightarrow S_{{p}^*}$ as follows,
			\begin{equation}
				\label{choice::o4cut}
				\begin{matrix}
					({v}^*,\,{q}^*)\\
					({u}^*,\,{p}^*)
				\end{matrix}
				\in 
				\left\{
				\begin{aligned}
					&\argopt_{\begin{tiny}
							\begin{matrix}
								(v,q)\in\mathbb{D}_j(I,L),u\in\mathcal{N}(v,L),\\
								p\in[k]\backslash\{\labeling(u,I)\}
							\end{matrix}
					\end{tiny}}\left\{F(I\circ (v\rightarrow S_q)\circ (u\rightarrow S_p))\right\}, &
					\text{if }\opt\, F\in\text{MaxGCP},\\
					&\argopt_{\begin{tiny}
							\begin{matrix}
								(v,q)\in\mathbb{D}_j(I,L),u\in\mathcal{N}(v,L),\\
								p\in[k]\backslash\{\labeling(u,I)\}
							\end{matrix}
					\end{tiny}}\left\{F(I\circ (v\rightarrow S_q)\circ (u\rightarrow S_p))\right\}, &
					\text{if }\opt\, F\in\text{MinGCP},
				\end{aligned}
				\right.
			\end{equation}
			with $\mathcal{N}(v,L)=\{u:\,L_u=0,\,\{u,v\}\in E\}$. Repeat this operation if $\mathbb{D}_j(I,L)=\emptyset$ or $\mathcal{N}(v,L)=\emptyset$, $\forall\,v\in\mathbb{D}_j(I,L)$ and we set $j\leftarrow j+1$.  %(?????)

			%until all vertices are examined. 

			%	Note that $\widetilde{O}_3$ and $\widehat{O}_3$ prioritize randomness and diversity more than $\widetilde{O}_4$ and $\widehat{O}_4$, while $\widetilde{O}_4$ and $\widehat{O}_4$ care solution improvement more than $\widetilde{O}_3$ and $\widehat{O}_3$, suggesting that the tabu search operators strike a balance between solution quality and diversity while maintaining the potential to examine all vertices. 
			
			Above four tabu search operators are designed to discover improved solutions in the vicinity of those generated by the local search operators. When these tabu search operators fail to provide enough diversity to continue the improvement, MMH switches to a strong perturbation operator.

			\subsubsection{Random perturbation operator $O_5$}
			\label{sec::perturb}

            {\textbf{The strong perturbation operator $O_5$} randomly selects $\eta$ vertices
and moves each selected vertex to a different segment, where $\eta$ is sampled
by $\randn(0.1|V|,0.3|V|)$ (Algorithm~\ref{algorithm::combinatorial},
lines~43--44). The resulting $\eta$ single-vertex moves impose a substantial
perturbation on the current partition, which enhances diversity and enables the
algorithm to explore distant regions of the search space. After $O_5$, MMH reverts to the local search phase.}
			
			% \textbf{The strong perturbation operator $O_5$} randomly selects  $\eta$ vertices and assigns them into other segments with $\eta$ determined by $\randn(0.1|V|,0.3|V|)$ (see Algorithm \ref{algorithm::combinatorial}, lines 43-44). These $\eta$ times of moving single vertices significantly increase the diversity and facilitate the exploration of improved solutions that are far from the current one. After $O_5$, MMH reverts to the local search phase.
			
			%\textbf{The strong perturbation operator $O_5$} is based on random single-transfer moves. Each move randomly selects a vertex $v\in V$ and assigns it to a randomly chosen moved-in subset$S_q$, $q\in [k]\backslash\{\labeling(v,I)\}$ for the solution $I$. $O_5$ is applied by executing random single-transfer moves for $\eta$ times, where $\eta$ is determined by $\randn(0.1|V|,0.3|V|)$ (see Algorithm \ref{algorithm::combinatorial}, lines 43-44). These random single-transfer moves significantly increase the diversity within the solution and make it more likely to be further improved. Once the execution of $O_5$ is completed, the mutation algorithm returns to the local search phase.

			\subsubsection{Category search: An auxiliary cut mutation heuristic (ACMH)}
			\label{sec::cut-combined}

            {ACMH combines the target graph cut problem with several auxiliary ones by
reusing MMH under different objective functions. Since MMH takes the objective
function as an input, the same search procedure can be applied to different
graph cut formulations without changing its main structure. ACMH first applies
MMH to the target objective, and then uses auxiliary objectives to guide the
search toward different regions of the solution space. During the auxiliary
search, each newly updated solution is still checked against the target
objective, so that any improvement for the target problem can be retained. The
detailed procedure is outlined in Algorithm~\ref{algorithm::acmh}.}

			% ACMH, combining the target and auxiliary graph cut problems, leverages the generality of MMH to simply implement a possible enhancement of solution quality, and its detailed procedure for solving $\opt\,F(\cdot)$ is outlined in Algorithm~\ref{algorithm::acmh}.

			\begin{breakablealgorithm}
				\caption{An auxiliary cut mutation heuristic (ACMH) on $G = (V, E)$.} 
				\label{algorithm::acmh}
				\begin{algorithmic}[1] \small
					\Require  
					\begin{align*}
						\opt\, F(\cdot): & \text{~target } k\text{-CUT problem}; \\
						\opt\,F_{1}(\cdot), \ldots, \opt\,F_{p}(\cdot): & ~p\text{ different auxiliary }k\text{-CUT problems};\\
						I^*=\{S_1,\ldots,S_k\}: & \text{~initial solution}; \\
						n_{\max}:& \text{~maximum number of search operations.}
					\end{align*}
					%\State $I^*\leftarrow I$.
					\While{not satsfying stopping criteria}
					\State Input $I^*$ and $\opt\,F(\cdot)$ to MMH in Algorithm \ref{algorithm::combinatorial} and then output $I^*$ when the number of search operations reach $n_{\max}$.
					\State $I\leftarrow I^*$.
					\State Randomly select $i\in[p]$.
					\State  Input $I$ and $\opt\,F_i(\cdot)$ to MMH in Algorithm \ref{algorithm::combinatorial}   with a monitor: Add the following lines \begin{align*}
						&\textbf{If } F(I)\text{ is better than }F(I^*)\textbf{ then}\\
						& \qquad  I^*\leftarrow I.\\
						&\textbf{end} \textbf{ if}
					\end{align*}
					after each update $I\leftarrow I\circ O$ in Algorithm \ref{algorithm::combinatorial} with $O\in\{\widetilde{O}_1, \widehat{O}_1, \widetilde{O}_2, \widehat{O}_2,$ $\widetilde{O}_3, \widehat{O}_3, \widetilde{O}_4, \widehat{O}_4, O_5\}$. Output $I^*$ when the number of search operations reach $n_{\max}$.
					\EndWhile
					\State \Return{$I^*$}.
				\end{algorithmic}  
			\end{breakablealgorithm}

			%			\blue{\begin{itemize}
					%					\item \textbf{Step 1:} Select $r$ different auxiliary $k$-CUT problems that are highly related to the target problem: $\opt\,F_{1}(\cdot)$, $\ldots$, $\opt\,F_{r}(\cdot)$.
					%					\item \textbf{Step 2:} Run Algorithm \ref{algorithm::combinatorial} towards $\opt\,F(\cdot)$ for $n_{\max}$ iterations.
					%					\item \textbf{Step 3:} Randomly select $i\in[r]$, and run Algorithm \ref{algorithm::combinatorial} towards $\opt\,F(\cdot)$. During each iteration, whenever the solution is updated by some search operator, check whether the objective function value $F$ is promoted. After completing $n_{\max}$ iterations, return the best solution for $\opt\,F$.
					%					\item \textbf{Step 4:} Go to \textbf{step 2} until the overall modified mutation phase meets the stopping condition.
					%			\end{itemize}}
			
			Note that the auxiliary $k$-CUT problems should belong to the same category (MaxGCP or MinGCP) as the target, although they may differ in balance. Numerical experiments in Section \ref{sec::experiment} shows that
			ACMH achieves superior solution quality compared to MMH for e.g., \textsc{Normalized}-$2$-\textsc{Cut} (see Table~\ref{tab::normalized2cut}).

			\subsection{Population selection}
			\label{sec::update}
			
			A possible solution set $\mathbb{P}=\mathbb{I}\cup\mathbb{M}$ is composed of $\mathbb{I}$, the current population with size being $|\mathbb{I}|=m$ and $\mathbb{M}$, the solutions after mutation with size being $|\mathbb{M}|=2r$ (see Algorithm \ref{algorithm::framework}, line 6). Instead of exclusively selecting the top $m$ best solutions in $\mathbb{P}$ as the new population, we select the top $m/2$ best solutions in $\mathbb{P}$ to constitute half of the population, denoted as $P_1$. Such selection ensures the quality of the new population.  The remaining half of the population, denoted by $P_2$,  
			is obtained by solving the following Maximum Diversity Problem (MDP) \cite{Duarte2006,Marti2013} that introduces diversity to the new population,  
			$$
			P_2 = \argmax_{P_2 \subset \mathbb{P} \backslash P_1, |P_2|=m/2} \text{diversity}(P_1 \cup P_2),
			$$
			where the diversity function is defined for any $p$ solutions $\left\{I_t\right\}_{t=1}^p$ as follows
			$$\text{diversity}\left(\left\{I_t\right\}_{t=1}^p\right) = \sum_{1 \leq i < j \leq p} \dist_C(I_i,I_j).$$
			Here the distance metric $\dist_C(I_i,I_j)$ counts the number of edges located in exactly one of the solutions $I_i$ and $I_j$ \cite{Marti2009}. Algorithm~\ref{algorithm::selection} displays the entire procedure for obtaining the new polulation from the set $\mathbb{P}$ where a greedy local search method is adopted to solve MDP.

			\begin{breakablealgorithm}
				\caption{Population selection from the possible solution set $\mathbb{P}$ for $\opt\, F(\cdot)$.} 
				\label{algorithm::selection}
				\begin{algorithmic}[1] \small
					\Require  $m+2r$ : number of solutions in  $\mathbb{P}$. % The population size $m$ and the number of possible solution pairs $r$.
					\State Sort $\mathbb{P}$ in ascending order of the objective function values $F$ if $\opt=\min$, while sort $\mathbb{P}$ in descending order of $F$ if $\opt=\max$.
					\State $P_1=\{I_t\}_{t=1}^{m/2}$, $P_2=\{I_t\}_{t=m/2+1}^{m}$ and $P_3=\{I_t\}_{t=m+1}^{m+2r}.$
					\While{$\text{diversity}(P_1 \cup P_2)$ can be improved}
					\State  Swap the solution $I_i\in P_2$ and $I_j\in P_3$ when $\text{diversity}(P_1\cup P_2)$ increases the most.
					\EndWhile
					\State \Return{$P_1\cup P_2$}.
				\end{algorithmic}  
			\end{breakablealgorithm}
			
			\section{Experiment settings}
			\label{sec::experiment}
			
			%			In this paper, we focus on all $9$ graph $k$-CUT problems ($k=2,3,4,5$) in Section \ref{sec::notation}. 
			
			We conduct numerical experiments for all $9$ graph $k$-CUT problems with $k=2,3,4,5$ in Section \ref{sec::notation}
			on $35$ graphs with non-negative weights from the well-known benchmark G-set.
            % \footnote{Downloaded from \href{https://web.stanford.edu/~yyye/yyye/Gset/}{https://web.stanford.edu/$\sim$yyye/yyye/Gset/}}. 
            Both MMH in Algorithm~\ref{algorithm::combinatorial} and ACMH in Algorithm~\ref{algorithm::acmh} are integrated into PEAF in Algorithm~\ref{algorithm::framework},
			and the resulting algorithms are dubbed PEAF-MMH and PEAF-ACMH, respectively. 
			We set the population size $m=8$ in Algorithm~\ref{algorithm::selection} and the number of possible solution pairs $r=4$ in Algorithm~\ref{algorithm::framework}.  
			The stopping criterion is a predefined runtime limit:
			\begin{equation}\label{eq:timelimit}\left\{
				\begin{aligned}
					&15\text{ minutes},&\text{if }\opt\,F\in\{\text{\textsc{Min}-$k$-\textsc{Cut}, \textsc{MinMax}-$k$-\textsc{Cut}}\},\\
					&\left\{
					\begin{aligned}
						&30\text{ minutes}, &\text{if }|V|< 5000,\\
						&120\text{ minutes},&\text{if }5000\leq |V|< 10000,\\
						&240\text{ minutes}, &\text{if }|V|\geq 10000,
					\end{aligned}
					\right.
					&\text{otherwise.}
				\end{aligned}
				\right.
			\end{equation}
			In view of that MOH itself is an efficient algorithm to solve \textsc{Max}-$k$-\textsc{Cut} \cite{Ma2015}, 
			we also conduct numerical experiments utilizing the mutation-only algorithms MMH and ACMH to solve generalized graph $k$-CUT problems.
			That is, we will make a performance comparison among PEAF-MMH, PEAF-ACMH, MMH and ACMH on the same footing,
			all of which are compiled using GNU G++ with compiling flags ``\texttt{-pthread -O2}'' on the computing platform of 2*Intel Xeon E5-2650-v4/2.2GHz with 128GB RAM.
			For all of them, we select the tabu tenure $L_i$ for vertex $i$ by
			\begin{equation}
				\label{tabu::tenure}
				L_i\in \randn(3, L_{\max}),\quad L_{\max}\in \randn\left(0.05 |V|, 0.15 |V|\right),
			\end{equation}
			where $\randn(a,b)$ denotes a random integer between $a$ and $b$. Other remaining parameters are detailed in Table~\ref{tab::parameter}.
			The auxiliary cuts needed in ACMH and PEAF-ACMH are listed in Table~\ref{tab::cut-combine}. 
            {We use \texttt{Gurobi} (version 10) as a reference solver under two settings,
G-H and G-0.5H. Both settings use Gurobi's NoRel heuristic, which searches for
feasible solutions before the root-node relaxation. G-H sets
\texttt{NoRelHeurTime=timeLimit}, allowing this heuristic to use the whole time
budget, while G-0.5H sets \texttt{NoRelHeurTime=0.5*timeLimit}, leaving the
remaining time for Gurobi's standard MIP search.}            
			% Meanwhile, we will adopt the results produced by \texttt{Gurobi} (version 10) as reference solutions, and two modes, dubbed G-H and G-0.5H, will be employed:
			% G-H  sets \texttt{NoRelHeurTime=timeLimit} for triggering the heuristic option to spend the entire runtime and G-0.5H \texttt{NoRelHeurTime=0.5timeLimit} for a heuristic search that spends half of the runtime before solving the root relaxation.
			We run G-H and G-0.5H on the platform of AMD TR1950X/3.4GHz with 128GB RAM under the same runtime limits in Eq.~\eqref{eq:timelimit}.

			%			to generate reference solutions for comparative analysis. To exploit the heuristic capabilities of \texttt{Gurobi}, we set \texttt{NoRelHeurTime=timeLimit}, named G-H. To achieve a balance between heuristic and deterministic methods, we set \texttt{NoRelHeurTime=0.5timeLimit}, named G-0.5H. Both are executed under the same runtime limits as our proposed algorithms.

			%			To ensure a rigorous comparison with mutation-only algorithms, both the MMH and ACMH algorithms are executed using the same $8$ initial solutions with the same time limits for each graph.
			
			%			\blue{For each $k$-CUT problem on each graph instance, we employ the widely recognized solver \texttt{Gurobi} (version 10) to generate reference solutions for comparative analysis. To exploit the heuristic capabilities of \texttt{Gurobi}, we set \texttt{NoRelHeurTime=timeLimit}, named G-H. To achieve a balance between heuristic and deterministic methods, we set \texttt{NoRelHeurTime=0.5timeLimit}, named G-0.5H. Both are executed under the same runtime limits as our proposed algorithms.}

			%			\subsection{Experimental setup}
			%			Our proposed PEA-ACMH, PEA-MMH, ACMH and MMH algorithms are compiled using GNU G++ with compiling flags ``\texttt{-pthread -O2}'' on the computing platform of 2*Intel Xeon E5-2650-v4/2.2GHz with 128GB RAM.  G-H and G-0.5H are implemented on the platform of AMD TR1950X/3.4GHz with 128GB RAM.
			
			%\subsection{Parameter settings}

\begin{table}[htbp]
	\centering
	\small
	\caption[Auxiliary cuts]{{Parameter settings of MMH, ACMH, PEAF-MMH, and
	PEAF-ACMH on G-set. All four algorithms use the runtime limit in
	Eq.~\eqref{eq:timelimit} as the overall stopping condition. The ``module limit'' gives the maximum number of search operations allowed for
	each internal MMH or ACMH call; all unspecified parameters are inherited
	from the corresponding lower-level module.}}
	\label{tab::parameter}
	\begin{tabular}{|p{0.18\linewidth}|p{0.25\linewidth}|p{0.47\linewidth}|}
		\hline
		Algorithms & Items & Parameter settings \\ \hline
		\multirow{4}{*}{MMH}
		& $\omega$ & $0.1|V|$ \\ \cline{2-3}
		& $\xi$ & MaxGCP: 1000; MinGCP: 100 \\ \cline{2-3}
		& $\eta$ & $\randn(0.1|V|,0.3|V|)$ \\ \cline{2-3}
		& $(\gamma_0,\gamma_1,\gamma_2,\gamma_3)$
		& MaxGCP: $(1,0.5,1,1)$;
		MinGCP: $(1,0.5,1,1)$ for G1--G54, and $(0,0,0,0.5)$ for the other graphs \\ \hline
		\multirow{2}{*}{ACMH}
		& Auxiliary problems for $\opt\,F$
		& Table~\ref{tab::cut-combine} \\ \cline{2-3}
		& MMH block
		& module limit $n_{\max}=10^5$; MMH parameters as above \\ \hline
		PEAF-MMH
		& \textsc{parallel\_mutation} $=$ MMH
		& module limit $2\times 10^6$; MMH parameters as above \\ \hline
		PEAF-ACMH
		& \textsc{parallel\_mutation} $=$ ACMH
		& module limit $2\times 10^6$; ACMH settings as above \\ \hline
	\end{tabular}
\end{table}

			\begin{table}[htbp]
				\centering
				\caption[Auxiliary cuts]{The auxiliary cuts  $\opt\,F_1$ and $\opt\,F_2$ for the target problem $\opt\,F$. Compared to \textsc{Cheeger}-$k$-\textsc{Cut}, 	\textsc{Cheeger}$_2$-$k$-\textsc{Cut}~\cite{lee2014multiway} replaces  $\min\{\vol(S_p),\vol(S_p^c)\}$ with $\vol(S_p)$. Similarly, 	\textsc{AntiCheeger}$_2$-$k$-\textsc{Cut} and 	\textsc{Sparsest}$_2$-$k$-\textsc{Cut} are derived from \textsc{AntiCheeger}-$k$-\textsc{Cut} and \textsc{Sparsest}-$k$-\textsc{Cut}, respectively. \textsc{MinMax}-$k$-\textsc{Cut} employs a ``max-boundary" form in place of the ``sum-boundary" form utilized in  \textsc{Min}-$k$-\textsc{Cut}. Likewise, we introduce  \textsc{MaxMin}-$k$-\textsc{Cut} in a ``min-boundary" form as a replacement for the ``sum-boundary" form of \textsc{Max}-$k$-\textsc{Cut}.}
				\label{tab::cut-combine}
				% [inline block 2: 1 envs, 2147 chars -> data_tex | \begin{tabular}{|lll|} 					\hline...]

			\end{table}

			\section{Numerical results for MaxGCP}

			This section presents numerical results of PEAF-ACMH, PEAF-MMH, ACMH, MMH, G-H and G-0.5H in solving three $k$-CUT problems in MaxGCP: \textsc{Judicious}-$k$-\textsc{Partition} \eqref{eq::jp-k-cut} (see Section~\ref{sec::result-jp}),  \textsc{AntiCheeger}-$k$-\textsc{Cut} \eqref{eq::ah-k-cut} (see Section~\ref{sec::result-anticheeger})
			and  \textsc{Max}-$k$-\textsc{Cut} \eqref{eq::max-k-cut} (see Section~\ref{sec::result-maxcut}). For each graph, we report the average objective function value of top 4 solutions, as only half of the population give the highest-quality solutions. {For the best solution, the columns best, cut, and time report the objective
function value, the corresponding cut value, and the elapsed time in seconds
when the best solution is first found, respectively. A smaller time means that
the algorithm reaches its best result earlier.}            
            % We also provide the objective function value, cut value, and birth time of the best solution. 
            Specifically, the best objective value among all those six algorithms is highlighted in bold. Using Table~\ref{tab::judicious2partition} as an example, the aforementioned data is recorded in columns 2-17, and the last two columns give the results of G-0.5H and G-H, respectively.
			
			To further display the effect of the evolutionary framework and the auxiliary cut strategy, 
			we summarize the pairwise comparisons of PEAF-ACMH, PEAF-MMH, ACMH, MMH on the best and average objective function values (see e.g. Table~\ref{tab::jp-summary}). This results in a total of 12 groups of comparisons for each $k$-CUT problem, where each group records the number of instances in which the former outperforms, equals, or underperforms the latter. At the same time, we employ the Wilcoxon signed-rank test to statistically analyze the results, reporting the corresponding p-value. If the p-value is less than 0.05, the two algorithms are considered statistically significantly (SS) different, denoted as ``SS=yes".

			\subsection{\textsc{Judicious}-$k$-\textsc{Partition}}
			\label{sec::result-jp}
			
			Tables \ref{tab::judicious2partition}, \ref{tab::judicious3partition}, \ref{tab::judicious4partition} and \ref{tab::judicious5partition} display the numerical results for \textsc{Judicious}-$k$-\textsc{Partition} with $k=2,3,4,5$. {It can be observed that PEAF-ACMH, PEAF-MMH, ACMH, and MMH outperform
\texttt{Gurobi} on all graph instances, with the only exception of G70 when
$k=2$.}
            % It can be easily observed that PEAF-ACMH, PEAF-MMH, ACMH and MMH outperform \texttt{Gurobi} on all graph instances except G70. 
            Comparisons (see Table~\ref{tab::jp-summary}) of PEAF-MMH, MMH and ACMH reveal that integrating MMH with the parallel evolutionary framework or the auxiliary cut strategy yields improved solutions. Moreover, the comparisons involving PEAF-ACMH indicate that it outperforms the other three algorithms.

			Now we demonstrate that PEAF-ACMH achieves high-quality solutions for \textsc{Judicious}-$k$-\textsc{Partition} with the help of best-known results in~\cite{Ma2015} for \textsc{Max}-$k$-\textsc{Cut}. For each graph instance $G=(V,E)$ and $k\in\{2,3,4,5\}$, let $\text{jpo}(G,k)$ and $\text{jpc}(G,k)$ be the best objective function value (e.g. Column 3 of Table~\ref{tab::judicious2partition}) and the corresponding cut value (e.g. Column 4 of Table~\ref{tab::judicious2partition}) generated by PEAF-ACMH, respectively. Let $\text{mc}(G,k)$ denote the best cut value in~\cite{Ma2015} that serves as the reference result for \textsc{Max-}$k$\textsc{-Cut} on each graph instance in G-set. We assume that $\text{mc}(G,k)\approx\textsc{Max-}k\textsc{-Cut}(G)$, and thus have
			\begin{equation}
				\label{neq:jp}
				\begin{aligned}
					\frac{1}{k}\left(\frac{1}{2}\vol(V)-\text{mc}(G,k)\right)&\approx	\frac{1}{k}\left(\frac{1}{2}\vol(V)-\textsc{Max-}k\textsc{-Cut}(G)\right)\\
					& \leq \textsc{Judicious-}k\text{-Partition}(G)\leq \text{jpo}(G,k),\\
					\frac{1}{k}\left(\frac{1}{2}\vol(V)-\text{mc}(G,k)\right)&\approx	\frac{1}{k}\left(\frac{1}{2}\vol(V)-\textsc{Max-}k\textsc{-Cut}(G)\right)\\&\leq \frac{1}{k}\left(\frac{1}{2}\vol(V)-\text{jpc}(G,k)\right)\leq \text{jpo}(G,k),
				\end{aligned}
			\end{equation}
			leading to the following two ratios, the lower bound over the upper bound:
			\begin{align}
				\label{jp:ratio1}
				\text{ratio}_1(G,k) &= \frac{\frac{1}{k}\left(\frac{1}{2}\vol(V)-\text{mc}(G,k)\right)}{\text{jpo}(G,k)}, \\
				\text{ratio}_2(G,k) &= \frac{\text{jpc}(G,k)}{\text{mc}(G,k)}. \label{jp:ratio2}
			\end{align}
            {The two ratios are used as empirical indicators of solution quality. 
Specifically, $\text{ratio}_1(G,k)$ compares the objective function value obtained by
PEAF-ACMH with the numerical bound derived from the reference
\textsc{Max}-$k$-\textsc{Cut} value $\text{mc}(G,k)$, while
$\text{ratio}_2(G,k)$ compares the cut value of the same solution with
$\text{mc}(G,k)$. Thus, when both ratios are close to 1, the obtained
\textsc{Judicious}-$k$-\textsc{Partition} solution is close to the reference
bound and also preserves a cut value comparable to the best-known
\textsc{Max}-$k$-\textsc{Cut} result.}
			% It is easily verified that,  the closer both ratios approach 1, the better the PEAF-ACMH approximations to \textsc{Judicious-}$k$\text{-Partition} are. 
            Accordingly, we can draw the conclusion that PEAF-ACMH achieves high-quality solutions through the numerical values shown in Table~\ref{tab:jp-quality}. {It shows that the PEAF-ACMH solutions are close to the numerical
bounds induced by the best-known \textsc{Max}-$k$-\textsc{Cut} values. The
percentage of instances with $|\text{ratio}_1-1|<0.05$ exceeds 90\% for
$k=2,3,4$ and remains 65\% for $k=5$. Moreover, the worst
$\text{ratio}_2$ over all instances is at least 0.99824, indicating
that the obtained judicious partitions almost preserve the reference
\textsc{Max}-$k$-\textsc{Cut} values. When $\text{ratio}_2=1$, the same cut
value is attained while the balance-related objective is further optimized.}
			
			%, and demonstrate that \textsc{Judicious-}$k$\textsc{-Partition} is a balanced variant of \textsc{Max-}$k$\textsc{-Cut}.

			\begin{table}[htbp]
				\centering
				\caption{{Summary of the two quality ratios in
Eqs.~\eqref{jp:ratio1}--\eqref{jp:ratio2} for
\textsc{Judicious}-$k$-\textsc{Partition} on G-set. The table reports the
percentage of instances with $|\text{ratio}_1-1|<0.05$, the worst value
$\min_G \text{ratio}_2(G,k)$, and the percentage of instances with
$\text{ratio}_2=1$.}}
				\label{tab:jp-quality}
				% [inline block 3: 6 envs, 45346 chars -> data_tex | \begin{tabular}{|l|l|l|l|l|} 					\hline...]

					\end{landscape}
				}

				\subsection{\textsc{AntiCheeger}-$k$-\textsc{Cut}}
				\label{sec::result-anticheeger}
				
				Tables \ref{tab::anticheeger2cut}, \ref{tab::anticheeger3cut}, \ref{tab::anticheeger4cut} and \ref{tab::anticheeger5cut} display the numerical results for \textsc{AntiCheeger}-$k$-\textsc{Cut} with $k=2,3,4,5$. {PEAF-ACMH, PEAF-MMH, ACMH, and MMH outperform \texttt{Gurobi} on all graph
instances, except for G70 at $k=2$.}
                % It can be easily observed that PEAF-ACMH, PEAF-MMH, ACMH and MMH outperform \texttt{Gurobi} on all graph instances except G70. 
                The pairwise comparisons involving PEAF-ACMH in Table \ref{tab::anticheeger-summary} shows that it outperforms the other three algorithms, and the comparison between PEAF-MMH and MMH gives that  the parallel evolutionary framework indeed improves the solution quality.

				Using a similar analysis in Section \ref{sec::result-jp}, we are able to show that PEAF-ACMH achieves high-quality solutions for \textsc{AntiCheeger}-$k$-\textsc{Cut} on G-set. For each graph instance $G=(V,E)$ and $k\in\{2,3,4,5\}$, let $\text{aco}(G,k)$ and $\text{acc}(G,k)$ be the best objective function value (e.g. column 3 of Table~\ref{tab::anticheeger2cut}) and the corresponding cut value (e.g. column 4 of Table~\ref{tab::anticheeger2cut}) generated by PEAF-ACMH, respectively. Thus, we have
				\begin{equation}
					\label{neq:ac}
					\begin{aligned}
						\frac{2\text{mc}(G,k)}{\vol(V)}\approx \frac{2\textsc{Max-}k\textsc{-Cut}(G)}{\vol(V)}&\geq \textsc{AntiCheeger-}k\textsc{-Cut}(G) \geq \text{aco}(G,k),\\
						\frac{2\text{mc}(G,k)}{\vol(V)}\approx \frac{2\textsc{Max-}k\textsc{-Cut}(G)}{\vol(V)}&\geq\frac{2\text{acc}(G,k)}{\vol(V)} \geq \text{aco}(G,k),
					\end{aligned}
				\end{equation}
				and using the lower bound over the upper bound leads to 
				\begin{align}
					\label{ac:ratio1}
					\text{ratio}_1(G,k) &=\frac{\vol(V)\text{aco}(G,k)}{2\text{mc}(G,k)}, \\
					\text{ratio}_2(G,k) &=\frac{\text{acc}(G,k)}{\text{mc}(G,k)}. \label{ac:ratio2}
				\end{align}
{The two ratios assess the PEAF-ACMH solution from two aspects. The first ratio
compares the obtained \textsc{AntiCheeger}-$k$-\textsc{Cut} objective function value
with the reference bound $2\text{mc}(G,k)/\vol(V)$. The second ratio compares
the cut value of the same solution with the reference
\textsc{Max}-$k$-\textsc{Cut} value $\text{mc}(G,k)$. Thus, when both ratios
are close to 1, the obtained solution is close to the reference bound in terms
of the \textsc{AntiCheeger}-$k$-\textsc{Cut} objective and also has a cut value close to the
best-known \textsc{Max}-$k$-\textsc{Cut} result. Since $\text{mc}(G,k)$ is used
as a proxy for $\textsc{Max}$-$k$-\textsc{Cut}$(G)$, these ratios are interpreted
as numerical quality indicators.}
				% The closer both ratios approach 1, the better the PEAF-ACMH approximations to \textsc{AntiCheeger}-$k$-\textsc{Cut} are. 
Accordingly, we can draw the conclusion that PEAF-ACMH achieves high-quality solutions through the numerical values presented in Table~\ref{tab:ac-quality}. {Table~\ref{tab:ac-quality} shows that the PEAF-ACMH solutions are very close to the
reference bound $2\text{mc}(G,k)/\vol(V)$ for
\textsc{AntiCheeger}-$k$-\textsc{Cut}: all instances satisfy
$|\text{ratio}_1-1|<0.05$ for every $k=2,3,4,5$. The worst value of
$\text{ratio}_2$ is at least 0.99819, indicating that the cut values of the
obtained solutions almost match the best-known \textsc{Max}-$k$-\textsc{Cut}
values. When $\text{ratio}_2=1$, the solution attains the same cut value as the
reference \textsc{Max}-$k$-\textsc{Cut} result while further optimizing the
balance-oriented objective.}
				
				%, and demonstrate that \textsc{AntiCheeger}-$k$-\textsc{Cut} is a balanced variant of \textsc{Max-}$k$\textsc{-Cut}.

				\begin{table}[htbp]
					\centering
                    \caption{{Summary of the two quality ratios in
Eqs.~\eqref{ac:ratio1}--\eqref{ac:ratio2} for
\textsc{AntiCheeger}-$k$-\textsc{Cut} on G-set. The table reports the
percentage of instances with $|\text{ratio}_1-1|<0.05$, the worst value of
$\text{ratio}_2$, and the percentage of instances with $\text{ratio}_2=1$.}}
					% \caption{Summary of two ratios in Eqs.~\eqref{ac:ratio1} and \eqref{ac:ratio2} for \textsc{AntiCheeger}-$k$-\textsc{Cut} on G-set.}
					\label{tab:ac-quality}
					\begin{tabular}{|l|l|l|l|l|}
						\hline
						& $k=2$     & $k=3$     & $k=4$     & $k=5$     \\ \hline
						percentage of $|\text{ratio}_1-1|< 0.05$          & 100\%     & 100\% & 100\%     & 100\% \\ \hline
						\multirow{2}{*}{$\min_{G}\text{ratio}_2(G,k)$} & 0.99822   & 0.99819  & 0.99878   & 0.99872   \\
						& G63       & G60       & G58       & G23       \\ \hline
						percentage of $\text{ratio}_2=1$                 & 60\% & 54.2857\% & 48.5714\% & 57.1429\% \\ \hline
					\end{tabular}
				\end{table}

				\textbf{Balance analysis} 
				It can be numerically observed that 
				\textsc{Judicious}-$k$-\textsc{Partition} behaves more balanced than \textsc{AntiCheeger}-$k$-\textsc{Cut}
				since the latter exhibits 71 solutions with larger cut values, whereas the former only yields 19 larger cut values,
				out of 140 best solutions produced by PEAF-ACMH. That is, \textsc{AntiCheeger}-$k$-\textsc{Cut} tends to have larger cut value and is closer to \textsc{Max}-$k$-\textsc{Cut}, thereby producing more unbalanced cuts, than \textsc{Judicious}-$k$-\textsc{Partition}.
				This can be also verified using a similar analysis proposed in \cite{ding2001min} for $k=2$. 
				Assume the balanced case $\vol(S)\simeq \vol(S^c)$ for both \textsc{Judicious}-$2$-\textsc{Partition} and \textsc{AntiCheeger}-$2$-\textsc{Cut} and let
				$$\cut(\partial S)=\alpha\cdot\left(\frac{1}{2}\vol(V)-\cut(\partial S)\right),$$
				thus we have
				\begin{align*}
					\textsc{Judicious-}2 \textsc{-Partition}\quad&\simeq\quad \frac{\vol(V)}{2(1+\alpha)}, \\
					\textsc{AntiCheeger-}2 \textsc{-Cut}\quad&\simeq\quad \frac{\alpha}{1+\alpha}.
				\end{align*}
				If there exists a skewed cut $\{S_1,S_1^c\}$ that satisfies $\vol(S_1)-\cut(\partial S_1)\ll \vol(S_1^c)-\cut(\partial S_1)$ and is superior to $\{S,S^c\}$ for both 2-CUT problems, then we have $\vol(S_1^c)+\cut(\partial S_1)\simeq \vol(V)$  and
				\begin{align*}
					\vol(S_1^c)&<\left(\frac{3+2\alpha}{1+2\alpha}\right)\cut(\partial S_1) \text{ for }\textsc{Judicious-}2 \textsc{-Partition}, \\
					\vol(S_1^c)&<\left(\frac{1+\alpha}{\alpha}\right)\cut(\partial S_1) \text{ for }\textsc{AntiCheeger-}2 \textsc{-Cut}, \\
				\end{align*}
				which implies \textsc{Judicious}-$2$-\textsc{Partition} requires more effort to be skewed, thus being more balanced than \textsc{AntiCheeger}-$2$-\textsc{Cut} because 	 
				$$
				\frac{1+\alpha}{\alpha}  > \frac{3+2\alpha}{1+2\alpha}
				$$
				is always true for arbitrary positive $\alpha$. %$\forall \alpha>0$. 
				From above observations, we are able to see that numerical solutions of high-quality indeed help to transparently validate the combinatorial characteristics, such as the degree of balance discussed here, which may be easily neglected by those of low-quality.

				{
					\scriptsize
					\setlength{\tabcolsep}{3pt}
					\renewcommand{\arraystretch}{1.2}
					\begin{landscape}
						% [inline block 4: 5 envs, 47063 chars -> data_tex | \begin{longtable}{lllllllllllllllllll} 							\caption[Results for \textsc{AntiCheeger}-$2$-\textsc{Cut}]{Numerical resu...]

					\end{landscape}
				}

				\subsection{\textsc{Max}-$k$-\textsc{Cut}}
				\label{sec::result-maxcut}
				
				Tables \ref{tab::max2cut}, \ref{tab::max3cut}, \ref{tab::max4cut} and \ref{tab::max5cut} respectively record the numerical results for \textsc{Max}-$k$-\textsc{Cut} with $k=2, 3, 4, 5$. PEAF-ACMH, PEAF-MMH, ACMH, and MMH outperform \texttt{Gurobi} on all graph
instances, except for G70 at $k=2$. {G70 is extremely sparse, with
10000 vertices and 9999 edges. For $k=3,4,5$, both \texttt{Gurobi} and our
algorithms find partitions with no internal edges for
\textsc{Max}-$k$-\textsc{Cut}, \textsc{Judicious}-$k$-\textsc{Partition}, and
\textsc{AntiCheeger}-$k$-\textsc{Cut}. Hence, every edge is cut and the
partition induces $k$ independent sets. The result on G70 may therefore reflect
the special sparsity of this instance, rather than a general advantage of
\texttt{Gurobi}.}
                % It can be readily observed there that PEAF-ACMH, PEAF-MMH, ACMH and MMH outperform \texttt{Gurobi} on all graph instances except G70. 
				In Table \ref{tab::maxcut-summary}, we compare the results of ACMH and MMH, observing that MMH consistently outperforms ACMH, particularly for $k>2$. This may be attributed to the fact that the move-gain matrix $\widetilde{\mathcal{D}}$ directly stores the changes in the objective function values for \textsc{Max}-$k$-\textsc{Cut}, obviating the necessity for the auxiliary cut.
				
				%Now we prove that the best solutions obtained by the proposed four algorithms are high-quality solutions for \textsc{Max}-$k$-\textsc{Cut} on G-set. 
				
				For each graph instance $G=(V,E)$ and $k\in\{2,3,4,5\}$, let  $\text{mc}_1(G,k)$ be the best cut value (e.g. the best of Columns 3, 7, 11 and 15 in Table~\ref{tab::max2cut}), 
				and we calculate
				\begin{equation}
					\label{mc:ratio}
					\text{ratio}_0(G,k)=\frac{\text{mc}_1(G,k)}{\text{mc}(G,k)},  %\text{mc}_1(G,k)
				\end{equation}
				where $\text{mc}(G,k)$ denotes the best cut value in~\cite{Ma2015}. It is evident that the larger $\text{ratio}_0(G,k)$ is, the better the approximation of \textsc{Max}-$k$-\textsc{Cut} is. From Table~\ref{tab:mc-quality}, we know that our proposed algorithms produce approximation solutions that surpass the reported ones in \cite{Ma2015} especially for $k \in\{3, 4, 5\}$. % is set to 3, 4, 5.  
				Specifically, we list the graph instances and the corresponding cut values that satisfy $\text{ratio}_0>1$  in Table~\ref{tab::maxcut-better}.

				\begin{table}[htbp]
					\centering
					\caption{Summary of $\text{ratio}_0$ in Eq.~\eqref{mc:ratio}  for \textsc{Max}-$k$-\textsc{Cut} on G-set.}
					\label{tab:mc-quality}
					% [inline block 5: 2 envs, 3861 chars -> data_tex | \begin{tabular}{|l|l|l|l|l|} 						\hline...]

                    					\end{landscape}

				}
				
				%				Specifically, 
				%				the best solutions obtained by 
				%				
				%				
				%				these four algorithms are at least as good as the best solutions reported in \cite{Ma2015} for 88.5714\%, 97.1429\%, 100\% and 82.8571\% of the instances, respectively. Moreover, when $k$ is set to 3, 4 and 5, the results surpass the best known solutions for 48.5714\%, 62.8571\% and 57.1429\% of the instances, respectively.

				%			demonstrate that all proposed algorithms produce high-quality solutions for \textsc{Max}-$k$-\textsc{Cut} ($k=2, 3, 4, 5$), with the auxiliary cut being \textsc{MaxMin}-$k$-\textsc{Cut} (see Table \ref{tab::cut-combine}). 

				%				These algorithms outperform Gurobi on all graph instances except G70. 			
				%				
				%				\blue{In Table \ref{tab::maxcut-summary}, we compare the results of ACMH and MMH, observing that MMH consistently outperforms ACMH, particularly for $k>2$. This may be attributed to the fact that the move-gain matrix $\widetilde{\mathcal{D}}$ directly stores the changes in the objective function values for \textsc{Max}-$k$-\textsc{Cut}, obviating the necessity for the auxiliary cut. When comparing PEAF-MMH with MMH, there is no statistically significant difference in terms of the best solution contests. However, PEAF-MMH demonstrates superior performance over MMH with respect to average objective function values.}

				{
					\scriptsize
					\setlength{\tabcolsep}{3.5pt}
					\renewcommand{\arraystretch}{1.2}
					\begin{landscape}
						% [inline block 6: 5 envs, 48677 chars -> data_tex | \begin{longtable}{lllllllllllllllllll} 							\caption[Results for \textsc{Max}-$2$-\textsc{Cut}]{Numerical results for ...]

					\end{landscape}
				}

				\section{Numerical results for MinGCP}
				\label{sec::minGCP}
				
				This section presents numerical results of PEAF-ACMH, PEAF-MMH, ACMH, MMH, G-H and G-0.5H in solving six $k$-CUT problems $(k=2,3,4,5)$ in MinGCP: \textsc{Normalized}-$k$-\textsc{Cut} \eqref{eq::normalized-k-cut},  \textsc{Cheeger}-$k$-\textsc{Cut} \eqref{eq::cheeger}, \textsc{Min}-$k$-\textsc{Cut} \eqref{eq::min-k-cut} \textsc{MinMax}-$k$-\textsc{Cut} \eqref{eq::minmax-k-cut}, \textsc{Ratio}-$k$-\textsc{Cut} \eqref{eq::ratio-k-cut} and \textsc{Sparsest}-$k$-\textsc{Cut} \eqref{eq::sparsest}. Similar to the discussions in the last section for MaxGCP, we present four tables of numerical results and a summary table for each problem. % in MinGCP.

			\subsection{\textsc{Normalized}-$k$-\textsc{Cut}}

			%				\blue{The outcomes presented in Tables \ref{tab::normalized2cut}, \ref{tab::normalized3cut}, \ref{tab::normalized4cut} and 
				%					\ref{tab::normalized5cut} illustrate the performance of the algorithms PEAF-ACMH, PEAF-MMH, ACMH, MOH, G-0.5H and G-H, utilizing \textsc{Ratio}-$k$-\textsc{Cut} and 	\textsc{Cheeger}$_2$-$k$-\textsc{Cut} as auxiliary cuts. Across all graph instances except for G70, the four proposed algorithms exhibit performance that is at least on par with Gurobi. Detailed pairwise comparisons in Table \ref{tab::normalized-summary} among PEAF-MMH, MOH and ACMH reveal that incorporating the parallel evolutionary framework or the cut-combined strategy to MOH enhances solution quality. For $k=5$, ACMH produces notably superior solutions compared to PEAF-MMH, while it is difficult to discern which method performs better for $k<5$. The comparisons involving PEAF-ACMH in Table \ref{tab::normalized-summary} demonstrate that PEAF-ACMH substantially outperforms the other three algorithms, indicating that the integration of both improved methods yields the best solutions.}

			Tables \ref{tab::normalized2cut}, \ref{tab::normalized3cut}, \ref{tab::normalized4cut} and 
			\ref{tab::normalized5cut} display the numerical results for \textsc{Normalized}-$k$-\textsc{Cut} with $k=2,3,4,5$. It can be easily observed that PEAF-ACMH, PEAF-MMH, ACMH and MMH outperform \texttt{Gurobi} on all graph instances except G70. Pairwise comparisons (see Table~\ref{tab::normalized-summary}) of PEAF-MMH, MMH and ACMH reveal that integrating MMH with the parallel evolutionary framework or the auxiliary cut strategy yields improved solutions. Moreover, the comparisons involving PEAF-ACMH indicate that it outperforms the other three algorithms.

			{
				\scriptsize
				\setlength{\tabcolsep}{3pt}
				\renewcommand{\arraystretch}{1.2}
				\begin{landscape}
					% [inline block 7: 5 envs, 47031 chars -> data_tex | \begin{longtable}{lllllllllllllllllll} 						\caption[Results for 	\textsc{Normalized}-$2$-\textsc{Cut}]{Numerical resul...]

				\end{landscape}
			}

			\subsection{\textsc{Cheeger}-$k$-\textsc{Cut}}
			\label{sec::cheeger-k-cut}
			
			Tables \ref{tab::cheeger2cut}, \ref{tab::cheeger3cut}, \ref{tab::cheeger4cut} and \ref{tab::cheeger5cut} present the numerical results for \textsc{Cheeger}-$k$-\textsc{Cut} with $k \in \{2, 3, 4, 5\}$. It is evident that our four proposed algorithms---PEAF-ACMH, PEAF-MMH, ACMH, and MMH---consistently outperform \texttt{Gurobi} on all instances except for G70. As summarized in Table~\ref{tab::cheeger-summary}, PEAF-ACMH achieves the overall best performance among the four. Notably, the comparison between PEAF-MMH and MMH validates that the parallel evolutionary framework significantly enhances solution quality.
			
			\textbf{Balance Analysis} 
			Numerical observations indicate that \textsc{Cheeger}-$k$-\textsc{Cut} promotes more balanced partitions compared to \textsc{Normalized}-$k$-\textsc{Cut}. Among the 140 best solutions identified by PEAF-ACMH, \textsc{Normalized}-$k$-\textsc{Cut} yields 124 instances with smaller cut values, whereas \textsc{Cheeger}-$k$-\textsc{Cut} does so in only 3 instances. This suggests that \textsc{Normalized}-$k$-\textsc{Cut} tends to prioritize minimizing the cut value (aligning more closely with \textsc{Min}-$k$-\textsc{Cut}), thereby resulting in more skewed partitions than \textsc{Cheeger}-$k$-\textsc{Cut}.
			
			This behavior can be theoretically justified following the analysis in Section~\ref{sec::result-anticheeger} for $k=2$. Consider a balanced partition $\{S, S^c\}$ where $\vol(S) \simeq \vol(S^c)$. Let $$\cut(\partial S)=\alpha\cdot\left(\frac{1}{2}\vol(V)-\cut(\partial S)\right),$$ which implies 	\begin{align*}
				\textsc{Cheeger-}2\textsc{-Cut}\quad&\simeq\quad \frac{\alpha}{1+\alpha}, \\
				\textsc{Normalized-}2\textsc{-Cut}\quad&\simeq\quad \frac{2\alpha}{1+\alpha}.
			\end{align*} If there exists a skewed cut $\{S_1, S_1^c\}$ satisfying $\vol(S_1) - \cut(\partial S_1) \ll \vol(S_1^c) - \cut(\partial S_1)$ that is superior to $\{S, S^c\}$ for both 2-CUT problems, then $\cut(\partial S_1) < \vol(S_1) \ll \vol(S_1^c)$ and \begin{align*}
			\vol(S_1)&>\frac{1+\alpha}{\alpha}\cut(\partial S_1) \text{ for }\textsc{Cheeger-}2\textsc{-Cut},\\ 
			\vol(S_1)&>\frac{1+\alpha}{2\alpha}\cut(\partial S_1) \text{ for }\textsc{Normalized-}2\textsc{-Cut}. \\
			\end{align*} Given that$$
			\frac{1+\alpha}{\alpha}  > \frac{1+\alpha}{2\alpha}
			$$ holds for any positive $\alpha$, this indicates that \textsc{Cheeger}-2-\textsc{Cut} requires a higher threshold to favor a skewed cut over a balanced one, thus demonstrating intrinsically better balance than \textsc{Normalized}-2-\textsc{Cut}.

			{
				\scriptsize
				\setlength{\tabcolsep}{3pt}
				\renewcommand{\arraystretch}{1.2}
				\begin{landscape}
					% [inline block 8: 5 envs, 46932 chars -> data_tex | \begin{longtable}{lllllllllllllllllll} 						\caption[Results for \textsc{Cheeger}-$2$-\textsc{Cut}]{Numerical results f...]

				\end{landscape}
			}

			\subsection{\textsc{Min}-$k$-\textsc{Cut}}

			%				\blue{The results presented in Tables \ref{tab::min2cut}, \ref{tab::min3cut}, \ref{tab::min4cut} and \ref{tab::min5cut} clearly establish that all the proposed algorithms produce the high-quality solutions for \textsc{Min}-$k$-\textsc{Cut} ($k=2, 3, 4, 5$). The auxiliary cut employed is \textsc{MinMax}-$k$-\textsc{Cut} (see Table \ref{tab::cut-combine}), and all six algorithms yield identical results. Given that \textsc{Min}-$k$-\textsc{Cut} can be solved in polynomial time for a fixed $k$, our proposed algorithms consistently achieve the optimal solution within 20 seconds. Pairwise comparisons (see Table \ref{tab::mincut-summary}) indicate that the performance differences among these algorithms on G-set are statistically insignificant.}

			Tables \ref{tab::min2cut}, \ref{tab::min3cut}, \ref{tab::min4cut} and \ref{tab::min5cut} present the numerical results for \textsc{Min}-$k$-\textsc{Cut} with $k \in \{2, 3, 4, 5\}$. Remarkably, all six algorithms yield identical solutions across all tested instances. Pairwise comparisons (see Table~\ref{tab::mincut-summary}) further confirm that the performance differences between these algorithms are statistically negligible on G-set. Our four proposed algorithms consistently identify the optimal solutions within 20 seconds. The high efficiency observed here is consistent with the theoretical tractability of \textsc{Min}-$k$-\textsc{Cut} for a fixed $k$. The fact that our heuristic framework consistently identifies optimal solutions with such rapidity not only mirrors the problem's polynomial-time nature but also serves as a testament to the high-fidelity search capability of our algorithms.

%			\blue{Tables \ref{tab::min2cut}, \ref{tab::min3cut}, \ref{tab::min4cut} and \ref{tab::min5cut} respectively record the numerical results for \textsc{Min}-$k$-\textsc{Cut} with $k=2, 3, 4, 5$. All six algorithms yield identical results, and pairwise comparisons (see Table \ref{tab::mincut-summary}) also reveal that the performances of these algorithms on G-set are statistically insignificant. Our proposed four algorithms consistently achieve the optimal solution within 20 seconds, and its efficiency is partly attributed to the property that \textsc{Min}-$k$-\textsc{Cut} can be solved in polynomial time for a fixed $k$.}
			
			{
				\scriptsize
				\setlength{\tabcolsep}{8.5pt}
				\renewcommand{\arraystretch}{1.2}
				\begin{landscape}
					% [inline block 9: 5 envs, 39011 chars -> data_tex | \begin{longtable}{lllllllllllllllllll} 						\caption[Results for \textsc{Min}-$2$-\textsc{Cut}]{Numerical results for \...]

				\end{landscape}
			}

			\subsection{\textsc{MinMax}-$k$-\textsc{Cut}}
			\label{sec::min-max-k-cut}
			
				Tables \ref{tab::min-max2cut}, \ref{tab::min-max3cut}, \ref{tab::min-max4cut} and \ref{tab::min-max5cut} display the numerical results for \textsc{MinMax}-$k$-\textsc{Cut} with $k \in \{2, 3, 4, 5\}$. It is evident that our four proposed algorithms---PEAF-ACMH, PEAF-MMH, ACMH, and MMH---consistently match or exceed the performance of \texttt{Gurobi} across all instances. In the majority of cases, our framework identifies solutions identical to those of the exact solver, with PEAF-ACMH and ACMH yielding even better results for G58 and G63 at $k=5$. Furthermore, pairwise comparisons (see Table~\ref{tab::min-max-summary}) indicate that the performance differences among the four variants are statistically negligible on G-set.
			
			A comparative analysis between PEAF-ACMH results for \textsc{Min}-$k$-\textsc{Cut} and \textsc{MinMax}-$k$-\textsc{Cut} reveals a striking similarity: the absolute difference in their cut values is at most 4 (e.g., G52 in Tables~\ref{tab::min5cut} and \ref{tab::min-max5cut}). This minimal discrepancy, combined with the exceptionally low computational cost required to reach these solutions, provides compelling numerical evidence of the underlying structural affinity between the two problems. While the polynomial-time solvability of \textsc{Min}-$k$-\textsc{Cut} was established decades ago \cite{goldschmidt1994polynomial}, the corresponding complexity for \textsc{MinMax}-$k$-\textsc{Cut} was only recently confirmed \cite{chandrasekaran2023fixed}. Our results are entirely consistent with this theoretical landscape, demonstrating that an efficient, high-fidelity heuristic framework can serve as a powerful tool for uncovering and validating the theoretical properties of combinatorial $k$-CUT structures.

			{
				\scriptsize
				\setlength{\tabcolsep}{8.5pt}
				\renewcommand{\arraystretch}{1.2}
				\begin{landscape}
					% [inline block 10: 5 envs, 41862 chars -> data_tex | \begin{longtable}{lllllllllllllllllll} 						\caption[Results for \textsc{MinMax}-$2$-\textsc{Cut}]{Numerical results fo...]

				\end{landscape}
			}

			\subsection{\textsc{Ratio}-$k$-\textsc{Cut}}
			\label{sec::ratio-k-cut}

			%				\blue{Tables \ref{tab::ratio2cut}, \ref{tab::ratio3cut}, \ref{tab::ratio4cut} and \ref{tab::ratio5cut} present the outcomes produced by the algorithms PEAF-ACMH, PEAF-MMH, ACMH, MOH, G-0.5H and G-H, utilizing \textsc{Min}-$k$-\textsc{Cut} and \textsc{MinMax}-$k$-\textsc{Cut} as auxiliary cuts, indicating that across all graph instances, the four proposed algorithms perform at least as well as Gurobi. Pairwise comparisons in Table \ref{tab::ratio-summary} reveal that the performance of the four algorithms is generally similar when $k=2,3$. However, for $k=4,5$, PEAF-ACMH demonstrates superior performance compared to ACMH in terms of both the best and average objective function values, while PEAF-MMH outperforms MOH regarding the average objective function values. This suggests that the integration of the parallel evolutionary framework can enhance solution quality to a notable extent.}
			
			Tables \ref{tab::ratio2cut}, \ref{tab::ratio3cut}, \ref{tab::ratio4cut} and \ref{tab::ratio5cut} present the numerical results for \textsc{Ratio}-$k$-\textsc{Cut} with $k \in \{2, 3, 4, 5\}$. Notably, our proposed four algorithms---PEAF-ACMH, PEAF-MMH, ACMH, and MMH---consistently match or exceed the performance of \texttt{Gurobi} across all tested instances. Pairwise comparisons in Table~\ref{tab::ratio-summary} indicate that while the four variants perform similarly for $k=2$ and $3$, a clear distinction emerges as $k$ increases. For $k=4$ and $5$, PEAF-ACMH significantly outperforms its non-evolutionary counterpart, ACMH, thereby validating the effectiveness of the parallel evolutionary framework in further enhancing solution quality.
			
			Furthermore, a comparative analysis reveals that for certain graph instances (e.g., G22--G26 and G41--G47), the best cut values for \textsc{Ratio}-$k$-\textsc{Cut} are remarkably close to those of \textsc{Min}-$k$-\textsc{Cut}. These high-quality solutions are identified within a very short timeframe, suggesting a strong structural affinity between \textsc{Ratio}-$k$-\textsc{Cut} and \textsc{Min}-$k$-\textsc{Cut} on specific graph topologies.

%			Tables \ref{tab::ratio2cut}, \ref{tab::ratio3cut}, \ref{tab::ratio4cut} and \ref{tab::ratio5cut} display the numerical results for \textsc{Ratio}-$k$-\textsc{Cut} with $k=2,3,4,5$. It can be easily observed that PEAF-ACMH, PEAF-MMH, ACMH and MMH match or exceed the performance of \texttt{Gurobi} on all graph instances. From pairwise comparisons in Table~\ref{tab::ratio-summary}, the performances of the four proposed algorithms are similar for $k=2,3$. In terms of larger values of $k$, i.e. $k=4,5$, PEAF-AMCH demonstrates superior performance compared to ACMH, indicating that integrating the evolutionary framework can enhance the solution quality. 
%			
%		Additionally, a comparative analysis of the results for \textsc{Ratio}-$k$-\textsc{Cut} and \textsc{Min}-$k$-\textsc{Cut} show that, for certain graph instances (e.g., G22$\sim$G26, G41$\sim$G47), the corresponding cut values of their best solutions are remarkably close and are produced in very short time. This similarity implies that \textsc{Ratio}-$k$-\textsc{Cut} and \textsc{Min}-$k$-\textsc{Cut} may have noteworthy resemblance in their structural properties.

			{
				\scriptsize
				\setlength{\tabcolsep}{3.5pt}
				\renewcommand{\arraystretch}{1.2}
				\begin{landscape}
					% [inline block 11: 5 envs, 46710 chars -> data_tex | \begin{longtable}{lllllllllllllllllll} 						\caption[Results for \textsc{Ratio}-$2$-\textsc{Cut}]{Numerical results for...]

				\end{landscape}
			}

			\subsection{\textsc{Sparsest}-$k$-\textsc{Cut}}
			\label{sec::sparsest-k-cut}
			
				Tables \ref{tab::sparsest2cut}, \ref{tab::sparsest3cut}, \ref{tab::sparsest4cut} and \ref{tab::sparsest5cut} display the numerical results for \textsc{Sparsest}-$k$-\textsc{Cut} with $k \in \{2, 3, 4, 5\}$. Notably, our proposed  algorithms---PEAF-ACMH, PEAF-MMH, ACMH, and MMH---consistently outperform \texttt{Gurobi} across all tested instances. As indicated by the pairwise comparisons in Table~\ref{tab::cheeger-summary}, PEAF-ACMH achieves the superior performance among the four variants.
			
			\textbf{Balance Analysis}
			Numerical results suggest that \textsc{Sparsest}-$k$-\textsc{Cut} promotes more balanced partitions compared to \textsc{Ratio}-$k$-\textsc{Cut}. Among the 140 best solutions identified by PEAF-ACMH, \textsc{Ratio}-$k$-\textsc{Cut} yields smaller cut values in all instances, 112 of which are strictly smaller. This discrepancy confirms that \textsc{Ratio}-$k$-\textsc{Cut} tends to prioritize minimizing the cut value (aligning more closely with \textsc{Min}-$k$-\textsc{Cut}), thereby resulting in more skewed partitions than \textsc{Sparsest}-$k$-\textsc{Cut}.
			
			This behavior can be theoretically justified using an analysis similar to that in Section~\ref{sec::result-anticheeger} for $k=2$. Assume a balanced partition $\{S, S^c\}$ where $|S| \simeq |S^c|$ for both problems. Let $$\cut(\partial S)=\alpha\cdot\frac{1}{2}|V|,$$ 
			which leads to 	\begin{align*}
				\textsc{Sparsest-}2\textsc{-Cut}\quad&\simeq\quad 2\alpha, \\
				\textsc{Ratio-}2\textsc{-Cut}\quad&\simeq\quad 2\alpha.
			\end{align*} 
			If there exists a skewed cut $\{S_1, S_1^c\}$ satisfying $|S_1| \ll |S_1^c| \simeq |V|$ that is superior to $\{S, S^c\}$ for both variants, then \begin{align*}
				|S_1|&>\frac{1}{\alpha}\cut(\partial S_1) \text{ for }\textsc{Sparsest-}2\textsc{-Cut},\\ 
				|S_1|&>\frac{1}{2\alpha}\cut(\partial S_1) \text{ for }\textsc{Ratio-}2\textsc{-Cut}. \\
			\end{align*} This implies that \textsc{Sparsest}-$k$-\textsc{Cut} possesses a higher resistance to partition skewness, effectively maintaining better balance than \textsc{Ratio}-$k$-\textsc{Cut}, since$$
			\frac{1}{\alpha}  > \frac{1}{2\alpha}
			$$ holds for any $\alpha > 0$.

			{
				\scriptsize
				\setlength{\tabcolsep}{2pt}
				\renewcommand{\arraystretch}{1.2}
				\begin{landscape}
					% [inline block 12: 5 envs, 47668 chars -> data_tex | \begin{longtable}{lllllllllllllllllll} 						\caption[Results for \textsc{Sparsest}-$2$-\textsc{Cut}]{Numerical results ...]

				\end{landscape}
			}

			\section{Conclusion and outlook}
			\label{sec::conclusion}

			We present a generalized Parallel Evolutionary Algorithm Framework (PEAF) for solving a diverse suite of graph $k$-CUT problems. The framework integrates three synergized components: (1) a crossover stage with five operators ensuring that offspring inherit essential structural traits from reference solutions; (2) a hierarchical mutation engine featuring the Multiple Mutation Heuristic (MMH) and its auxiliary cut-combined variant ACMH, which leverage nine local search operators to refine the computational complexity more efficient than $\mathcal{O}(k|V|)$; and (3) a selection mechanism that balances population quality and genetic diversity. Numerical experiments on nine widely studied $k$-CUT problems with $k \in \{2, 3, 4, 5\}$ demonstrate that our proposed algorithms consistently match or exceeds the performance of \texttt{Gurobi} across the majority of G-set instances. 
			
			The versatility of PEAF provides a robust foundation for addressing more complex graph partitioning variants. Potential extensions include:
			
		\begin{enumerate}
				\item \textbf{Non-integer Weighted Graphs:} For graphs with positive non-integer weights, the discrete bucket sorting mechanism can be adapted into an interval-based framework. By partitioning the range $[-d_{\max}, d_{\max}]$ into $N$ sub-intervals 	\[
				\begin{aligned}
					1:&\quad\left(b_{\text{bound}}-\frac{2}{N}b_{\text{bound}},b_{\text{bound}}\right],\\
					\vdots&\\
					i:&\quad\left(b_{\text{bound}}-\frac{2i}{N}b_{\text{bound}},b_{\text{bound}}-\frac{2i-2}{N}b_{\text{bound}}\right],\\
					\vdots&\\
					N:&\left[-b_{\text{bound}},-b_{\text{bound}}+\frac{2}{N}b_{\text{bound}}\right].
				\end{aligned}
				\] a doubly-linked list structure can manage vertices based on their move-gain values falling within specific intervals, maintaining efficient local search.				
				\item \textbf{Signed Graphs:} To handle signed graphs with mixed edge weights, the definitions of boundary $|\partial S_p|$ and volume $\vol(S_p)$ must be generalized to incorporate both positive ($w_{uv}^+$) and negative ($w_{uv}^-$) contributions \cite{chiang2012scalable}. Specifically, the boundary $\partial S_p$ and volume $\vol(S_p)$ can be redefined as shown in $$|\partial S_p|=\sum_{u\in S_p,\,v\in S_p^c\text{ and }\{u,v\}\in E}w_{uv}^+\,\,\,\text{ and }\,\,\,\vol(S_p)=\sum_{u\in S_p}\sum_{\{u,v\}\in E}(w_{uv}^++w_{uv}^-),$$ respectively. Since the change in cut values during a single-transfer operation remains localized to the moved vertex and its neighbors, our bucket sorting framework can be extended to maintain stable move-gain matrices for signed structures.				
				\item \textbf{Constrained $k$-Cut Problems:} For variants with additional constraints, such as size balance limits $|S_p| \leq (1+\nu)|V|/k$, $\forall\, p\in [k]$, $\nu>0$, \cite{bourse2014balanced}, the framework can incorporate a "Relax-and-Restore" strategy. This involves a feasible modification phase using a step-by-step greedy migration to restore feasibility in the crossover and mutation phase. Specifically, for any solution $I=\{S_1,\ldots,S_k\}$, let $U_I=\{p\in [k]:\,|S_p|> (1+\nu)|V|/k\}$ be the set of unsatisfied constraints, then we iteratively move the vertex by $$I^\prime=I\circ v\rightarrow S_q \text{ with } v,\,q\in\argopt_{v\in \cup_{p\in U_I} S_p,\,q\in [k]\backslash U_I}\{f(I\circ v\rightarrow S_q)\},$$
				to make $I^\prime$ feasible. Alternating between unconstrained relaxation and feasibility refinement allows the algorithm to iteratively converge toward high-quality feasible solutions.
			\end{enumerate}

			\section*{Acknowledgement}
			This work was funded by the National Key R \& D Program of China (No. 2022YFA1005102) and
				the National Natural Science Foundation of China (Nos. 12526521, 12325112, 12288101). The authors would like to express their sincere gratitude to Dr. Chen Cheng for his participation in the discussions on the Max-Cut part during the early stage of this work, when he was an undergraduate student (September 2015 -- June 2019) at Peking University.
			%\bibliography{journalname,graph}
			%\bibliographystyle{alpha}
			
			%\bibliographystyle{plain}
			
			\bibliography{comb_framework_2024}
			
			\appendix
			\section{Appendix: Proofs of Proposition \ref{thm::o1comp}}
			\label{app:proof}

		To establish the computational complexities of the local search operators, we partition the proof into three critical components: the dynamics of move-gain updates, the efficiency of the bucket sorting data structure, and the incremental evaluation of the objective function.
		
		\paragraph{1. Move-Gain Update Dynamics}
		Consider a single-transfer move $S_{p_v} \to v \to S_{q_v}$ ($p_v \neq q_v$). The update rules for the stable move-gain matrices $\widetilde{\mathcal{D}}$ and $\widehat{\mathcal{D}}$ are characterized by their temporal and spatial locality:
		\begin{enumerate}
			\item \textbf{Vertex $v$:} The gain values directly associated with the moved vertex $v$ are updated according to $$ \text{new-}\widetilde{\mathcal{D}}_{vi}=\left\{
			\begin{aligned}
				&-\widetilde{\mathcal{D}}_{vi},& \text{if }i=p_v,\\
				& 0,&\text{if }i=q_v,\\
				&\widetilde{\mathcal{D}}_{vi}-\widetilde{\mathcal{D}}_{v{q_v}},&\text{otherwise},
			\end{aligned}
			\right.
			\quad 
			\text{new-}\widehat{\mathcal{D}}_{vi}=
			\left\{
			\begin{aligned}
				&-\widehat{\mathcal{D}}_{vi},& \text{if } i=p_v,\\
				&-\widehat{\mathcal{D}}_{vi},& \text{if } i=q_v,\\
				& \widehat{\mathcal{D}}_{vi}, & \text{otherwise}.
			\end{aligned}
			\right.
			$$	
			\item \textbf{Neighbors of $v$:} For each neighbor $u \in \mathcal{N}(v)$, the gain for any single-transfer $S_{p_u} \to u \to S_{q_u}$ ($p_u \neq q_u$) is adjusted via 	\begin{align}
				\text{new-}\widetilde{\mathcal{D}}_{u{q_u}}&=\widetilde{\mathcal{D}}_{u{q_u}}+\widetilde{\phi} w_{uv},	\label{adjust::dcut}\\
				\text{new-}\widehat{\mathcal{D}}_{ui}&=\widehat{\mathcal{D}}_{ui}+\widehat{\phi}^{\{i\}} w_{uv},\,\, i\in\{p_u,\,q_u,\,p_v,\,q_v\},	\label{adjust::dpartial}
			\end{align} where the specific update terms $\widetilde{\phi}$ and $\widehat{\phi}^{\{i\}}$ are detailed in Table~\ref{tab::d2phi}.
		\end{enumerate}
		Since the update only affects $v$ and its neighborhood $\mathcal{N}(v)$, at most $k(d_v+1)$ elements are modified. Given that each element requires only $\mathcal{O}(1)$ time, the total update cost is bounded by $\mathcal{O}(kd_v) \le \mathcal{O}(kd_{\max})$.
		
		\paragraph{2. Efficient Storage and Query via Bucket Sorting}
		To facilitate the rapid identification of the primary search area $\mathbb{D}(I)$, we utilize a bucket sorting scheme (see Figures~\ref{fig::dcut} and \ref{fig::dpartial}). 
		\begin{itemize}
			\item \textbf{Structure:} Each matrix $\mathcal{D} \in \{\widetilde{\mathcal{D}}, \widehat{\mathcal{D}}\}$ is stored in $k$ chief arrays. Each array $B_q$ contains $2d_{\max}+1$ nodes, where each node $B_{qi}$ acts as the head of a doubly-linked list for vertices with gain $\mathcal{D}_{vq} = d_{\max} + 1 - i$. 
			\item \textbf{Search Area Retrieval:} By maintaining pointers $b_{\max}^q$ and $b_{\min}^q$ to the first and last non-empty nodes in each $B_q$, the search area $\mathbb{D}(I)$ is reformulated as 	\begin{equation}
				\label{eq::normalized-move}
				\mathbb{D}(I)=\left\{
				\begin{aligned}
					&\bigcup_{q\in[k]\backslash\{\labeling(v,I)\}}\{(v,q):\,\mathcal{D}_{vq}=b_{\max}^q\},&\text{if }\opt\,F\in\text{MaxGCP},\\
					&\bigcup_{q\in[k]\backslash\{\labeling(v,I)\}}\{(v,q):\,\mathcal{D}_{vq}=b_{\min}^q\},&\text{if }\opt\,F\in\text{MinGCP},
				\end{aligned}
				\right.
				\quad \mathcal{D}\in\{\widetilde{\mathcal{D}},\,\widehat{\mathcal{D}}\}.
			\end{equation}		
		\item \textbf{Implementation:} We implement these doubly-linked lists using two matrices $\mathcal{M}^1, \mathcal{M}^2 \in \mathbb{N}^{k \times |V|}$, where $\mathcal{M}_{iq}^1$ and $\mathcal{M}_{iq}^2$ store the predecessor and successor of vertex $i$ within bucket $B_q$, respectively. As illustrated in Figures~\ref{fig::dcut} and \ref{fig::dpartial}, moving a vertex within the bucket array entails a standard deletion and insertion sequence. These operations are executed through a fixed number of element reassignments in $\mathcal{M}^1$ and $\mathcal{M}^2$, resulting in a constant-time complexity of $\mathcal{O}(1)$ per move. Consequently, the total overhead for synchronizing the bucket structures is strictly dictated by the number of affected elements in the move-gain matrices, which remains bounded by $\mathcal{O}(kd_{\max})$.
		\end{itemize}
		
		\paragraph{3. Incremental Evaluation of the Objective Function}
		Let $c$ denote the marginal cost of computing the change in the objective function $F(I)$ resulting from a move. In this framework, we avoid full re-evaluation. 
		Taking \textsc{Normalized}-$3$-\textsc{Cut} as an example (Figure~\ref{fig::example}), we maintain the current boundary sizes $\text{cut}(\partial \vec{S})$ and volumes $\text{vol}(\vec{S})$. When vertex $e$ moves from $S_2$ to $S_1$, the new objective value is computed via $$F(I\circ (e\rightarrow S_1))=\frac{\cut(\partial S_1)+\widehat{\mathcal{D}}_{51}}{\vol(S_1)+d_5}+\frac{\cut(\partial S_2)+\widehat{\mathcal{D}}_{52}}{\vol(S_2)-d_5}+\frac{\cut(\partial S_3)+\widehat{\mathcal{D}}_{53}}{\vol(S_3)}.$$ This incremental approach requires only $\mathcal{O}(k)$ operations, regardless of the graph size $|V|$, thus $c = \mathcal{O}(k)$ is satisfied for all nine $k$-CUT problems in this paper.
		
		\paragraph{4. Synthesis of Total Complexity}
		Based on the above, the total complexity of the search operators is synthesized as follows:
		\begin{enumerate}
			\item \textbf{Single-transfer ($\widetilde{O}_1, \widehat{O}_1$):} The cost is the maximum of (a) searching $\mathbb{D}(I)$~\eqref{eq::normalized-move}, which is $\mathcal{O}(c|\mathbb{D}(I)|)$, and (b) updating the gain matrices and pointers, which is $\mathcal{O}(kd_{\max})$. Since $c = \mathcal{O}(k)$, the total cost is:
			$$\mathcal{O}(\max\{k|\mathbb{D}(I)|, kd_{\max}\}) = \mathcal{O}(\max\{k|\mathbb{D}(I)|, kd_{\max}\}).$$			
			\item \textbf{Double-transfer ($\widetilde{O}_2, \widehat{O}_2$):} These operators explore move combinations within $\mathbb{D}(I)$ and their neighborhoods, leading to at most $|\mathbb{D}(I)|d_{\max}$ objective evaluations. Combined with the matrix update costs, the total complexity is:
			$$\mathcal{O}(\max\{kd_{\max}, c|\mathbb{D}(I)|d_{\max}\}) = \mathcal{O}(kd_{\max}|\mathbb{D}(I)|).$$
		\end{enumerate}
	
		In sparse graphs, where $|\mathbb{D}(I)| \ll |V|$, these bounds demonstrate that the PEAF framework maintains high efficiency by confining the search and update operations to the localized boundaries of the partitions.

			\begin{figure}[htbp]
				\centering
				% [inline block 13: 2 envs, 15609 chars in 2 pieces, piece 1 here, a bare % at each other -> data_tex | \begin{tikzpicture}[scale=1.2] 					\def\a{0.4} \def \b{0.4} \def \l{3} \def \r{5.5}...]

				\caption{The update in the bucket sorting arrays of $\widetilde{\mathcal{D}}$ by applying $\widetilde{O}_1$ on the solution $I=\{S_1,S_2,S_3\}$ with $S_1=\{a,b,c,d\}$, $S_2=\{e,f\}$ and $S_3=\{g,h,i\}$. (1) 
					move $B_2:\,g$ to $B_3:\,1$; (2) move $B_1:\,g$ to $B_1:\,1$; (3) move $B_1:\,f$ to $B_1:\,1$;
					(4) move $B_3:\,f$ to $B_3:\,0$; (5) remark $b_{\max}^1=1$, $b_{\min}^3=0$.
				}
				\label{fig::dcut}
			\end{figure}
			
			\begin{figure}[htbp]
				\centering
				%
				\caption{The update in the bucket sorting arrays of $\widehat{\mathcal{D}}$ by applying $\widehat{O}_1$ on the solution $I=\{S_1,S_2,S_3\}$ with $S_1=\{a,b,c,d\}$, $S_2=\{e,f\}$ and $S_3=\{g,h,i\}$. (1) 
					move $B_2:\,g$ to $B_3:\,1$;
					(2) move $B_3:\,f$ to $B_3:\,1$; (3) remark $b_{\min}^3=1$, $b_{\min}^2=0$.}
				\label{fig::dpartial}
			\end{figure}

		\end{document}